\documentclass[12pt]{article}
\usepackage{color}
\usepackage{amsmath}
\usepackage{bbold}
\usepackage{amsfonts,amssymb}
\usepackage{extarrows}
\usepackage{geometry}
\usepackage{authblk} 
\usepackage{hyperref} 
\usepackage{mathrsfs}
\usepackage[utf8]{inputenc}
\oddsidemargin -0.04cm \evensidemargin -0.04cm \textheight 23.5cm
\textwidth 17cm \topmargin 0.45cm \headheight 0 cm \headsep 0 cm
\parskip 0.1 cm

\def\0{\emptyset}

\newtheorem{theorem}{Theorem}[section]
\newtheorem{definition}[theorem]{Definition}
\newtheorem{lemma}[theorem]{Lemma}
\newtheorem{claim}[theorem]{Claim}

\newtheorem{conjecture}[theorem]{Conjecture}
\newtheorem{proposition}[theorem]{Proposition}

\newenvironment{proof}{{\noindent\it Proof.}}{\hfill $\square$\par}

\DeclareMathOperator{\ex}{\mathrm{ex}}

\usepackage{graphicx}

\usepackage{mathtools} 


\usepackage{enumerate}
\usepackage{enumitem}

\usepackage{
	amsmath,			
	amssymb,			
	enumerate,		    
	graphicx,			
	lastpage,			
	multicol,			
	multirow,			
	pifont,			    
}

\usepackage[numbers]{natbib}



\begin{document}


\title{The maximum number of triangles in graphs without the square of a path}

\author[1]{\small\bf Yichen Wang\thanks{Corresponding author: E-mail: wangyich22@mails.tsinghua.edu.cn}}
\author[2]{\small\bf Ervin Gy\H{o}ri\thanks{E-mail:  gyori.ervin@renyi.hu}}

\affil[1]{\small Department of Mathematical Sciences, Tsinghua University, Beijing 100084, P.R. China.}
\affil[2]{\small HUN-REN Alfréd Rényi Institute of Mathematics, 1053 Budapest, Hungary.}


\date{}

\maketitle\baselineskip 16.3pt

\begin{abstract}
    The generalized Tur\'an number for $H$ of $G$, denoted by $\ex(n,H,G)$, is the maximum number of copies of $H$ in an $n$-vertex $G$-free graph.
    When $H$ is an edge, $\ex(n,H,G)$ is the classical Tur\'an number $\ex(n,G)$.
    Let $P_k$ be the path with $k$ vertices. The square of $P_k$, denoted by $P_k^2$, is obtained by joining the pairs of vertices with distance at most two in $P_k$.

    The Tur\'an number of $P_k^2$, $\ex(n, P_k^2)$, was determined by several researchers.
    When $k=3$, $P_3^2$ is the triangle and $\ex(n, P_3^2)$ is well-known from Mantel's theorem~\cite{mantel1907vraagstuk}.
    When $k=4$, $\ex(n, P_4^2)$ was solved by Dirac~\cite{ExtensionsofTurnstheoremongraphs} in a more general context.
    When $k=5,6$, the problem was solved by Xiao, Katona, Xiao and Zamora~\cite{XIAO20221}.
    For general $k \ge 7$, the problem was solved by Yuan~\cite{YUAN2022103548} in a more general context.

    Recently, Mukherjee~\cite{MUKHERJEE2024113866} determined the generalized Tur\'an number $\ex(n, K_3, P_5^2)$.
    In this paper, we determine the exact value of $\ex(n, K_3, P_6^2)$ and characterize all the extremal graphs for $n \ge 11$.
\end{abstract}


{\bf Keywords:} generalized Tur\'an number, square of path.


\section{Introduction}\label{sec: intro}

In this paper, we consider simple undirected graphs.
For a graph $G$, let $V(G)$ and $E(G)$ denote the vertex set and edge set of $G$, respectively. We use $v(G)$ and $e(G)$ to denote the number of vertices and edges of $G$, respectively. 
That is, $v(G) = |V(G)|$ and $e(G) = |E(G)|$.
For a graph $G$ and a vertex subset $S \subseteq V(G)$, let $G[S]$ denote the subgraph of $G$ induced by $S$.
Let $K_s$ denote the complete graph on $s$ vertices.
Let $T(n,r)$ denote the Tur\'an graph, which is the complete $r$-partite graph on $n$ vertices with parts as equal as possible.
We use $t(n,r)$ to denote the number of edges of $T(n,r)$.

Let $P_k$ be the path with $k$ vertices. The \emph{square} of $P_k$, denoted by $P_k^2$, is obtained by joining the pairs of vertices with distance one or two in $P_k$~(see Figure~\ref{fig: Pk square}).
Similarly, we can define the $p$-th power of a path $P_k$ denoted by $P_k^p$ which is the graph obtained from taking a copy of $P_k$ and joining each pair of vertices of $P_k$ with distance at most $p$.

\begin{figure}
    \centering
    \includegraphics[width=0.8\linewidth]{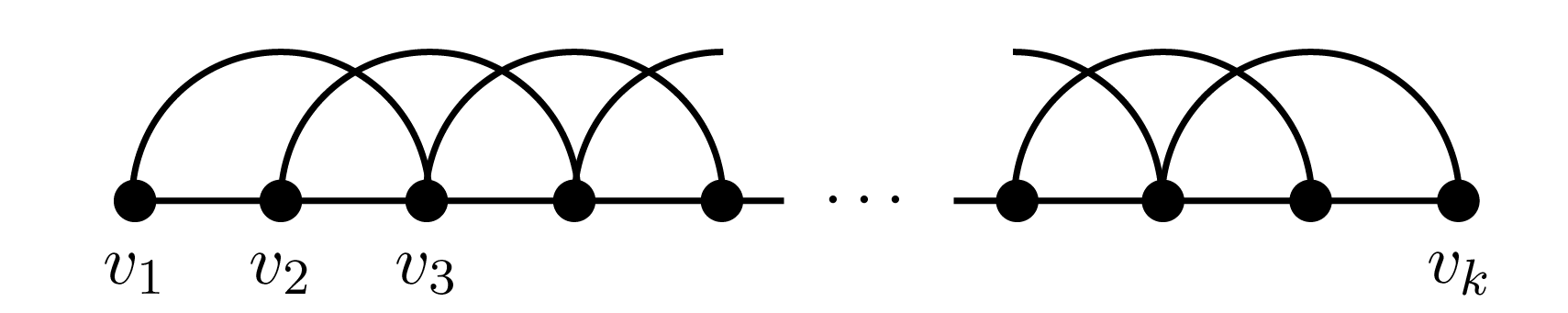}
    \caption{The square of $P_k$ denoted by $P_k^2$.}\label{fig: Pk square}
\end{figure}


Note that $P_3^2$ is the triangle, and the Tur\'an number of triangle is well-known from Mantel's theorem as follows.

\begin{theorem}[\cite{mantel1907vraagstuk}]\label{thm: mantel}
    The maximum number of edges in an $n$-vertex triangle-free graph is $\lfloor n^2/4 \rfloor$, that is $\ex(n, P_3^2) = \lfloor n^2/4 \rfloor = t(n,2)$.
    Furthermore, the only triangle-free graph with $\lfloor n^2/4 \rfloor$ edges is the complete bipartite graph $K_{\lfloor n/2 \rfloor, \lceil n/2 \rceil}$, that is, $T(n,2)$.
\end{theorem}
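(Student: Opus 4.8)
The plan is to prove the upper bound $\ex(n,P_3^2)=\ex(n,K_3)\le \lfloor n^2/4\rfloor$ by a maximum-degree argument, to observe that the complete bipartite graph $K_{\lfloor n/2\rfloor,\lceil n/2\rceil}=T(n,2)$ is triangle-free and attains this value, and then to run the equality case of the same argument to force uniqueness. The construction is immediate: $T(n,2)$ is bipartite, hence triangle-free, and has exactly $\lfloor n/2\rfloor\lceil n/2\rceil=\lfloor n^2/4\rfloor$ edges, so the whole content lies in the upper bound together with the characterization of equality.

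For the upper bound, let $G$ be a triangle-free graph on $n$ vertices and let $v$ be a vertex of maximum degree $\Delta$. Set $A=N(v)$ and $B=V(G)\setminus A$. First I would observe that $A$ is independent: any edge inside $A$ would, together with $v$, form a triangle. Consequently every edge of $G$ has at least one endpoint in $B$, so $e(G)\le \sum_{b\in B}d(b)$; more precisely $\sum_{b\in B}d(b)-e(G)=e(G[B])\ge 0$. Bounding each term $d(b)$ by $\Delta$ and using $|B|=n-\Delta$ gives $e(G)\le (n-\Delta)\Delta$. Finally, since $\Delta(n-\Delta)\le n^2/4$ for every real $\Delta$ and the left-hand side is an integer, I obtain $e(G)\le\lfloor n^2/4\rfloor$, as desired.

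For uniqueness, suppose $e(G)=\lfloor n^2/4\rfloor$, so that every inequality above is tight. The equality $e(G)=\sum_{b\in B}d(b)$ forces $e(G[B])=0$, hence $G$ is bipartite with parts $A$ and $B$. The equality $\sum_{b\in B}d(b)=(n-\Delta)\Delta$ forces $d(b)=\Delta=|A|$ for every $b\in B$, so every vertex of $A$ is joined to every vertex of $B$; since $G$ is already bipartite with parts $A,B$, this means $G$ is exactly the complete bipartite graph on $A$ and $B$. Finally the equality $(n-\Delta)\Delta=\lfloor n^2/4\rfloor$ pins down the part sizes: the integer-valued function $\Delta\mapsto\Delta(n-\Delta)$ attains $\lfloor n^2/4\rfloor$ only when $\{|A|,|B|\}=\{\lfloor n/2\rfloor,\lceil n/2\rceil\}$. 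Therefore $G=K_{\lfloor n/2\rfloor,\lceil n/2\rceil}=T(n,2)$.

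The inequality chain is routine; the step requiring the most care is the equality analysis, where one must extract three independent consequences—no edge inside $B$, full degree for every vertex of $B$, and the extremal split of the part sizes—from the single numerical equality and then recombine them. The only genuine subtlety is the mild parity bookkeeping when identifying the maximiser of $\Delta(n-\Delta)$ over the integers (even versus odd $n$), and this is precisely what produces the unique extremal graph $K_{\lfloor n/2\rfloor,\lceil n/2\rceil}$.
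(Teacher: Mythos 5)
Your proof is correct and complete: it is the classical maximum-degree argument for Mantel's theorem, and the equality analysis (independence of $B$, full degrees in $B$, and the integer maximiser of $\Delta(n-\Delta)$) correctly pins down $T(n,2)$ as the unique extremal graph. Note that the paper does not prove this statement at all --- it is quoted as a known result with a citation to Mantel --- so there is no in-paper argument to compare against; your write-up would serve as a self-contained substitute for the citation.
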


The case $P_4^2$ was solved by Dirac in a more general context~\cite{ExtensionsofTurnstheoremongraphs}.
The cases $P_5^2$ and $P_6^2$ were solved by Xiao, Katona, Xiao and Zamora~\cite{XIAO20221}. 
Here we state their results for $P_6^2$ as follows.

\begin{theorem}[\cite{XIAO20221}]\label{thm: katona P62}
    Let
    \begin{equation*}
    \begin{aligned}
        f(n) = \left\{ \begin{array}{ll}
            \lfloor \frac{n-1}{2}\rfloor, & n \equiv 1,2,3 \pmod 6, \\
             \lceil \frac{n}{2} \rceil, &\text{otherwise}.\\
        \end{array} \right.
    \end{aligned}
    \end{equation*}
    Then the maximum number of edges in a $P_6^2$-free graph on $n$ vertices~($n \neq 5$) is
    \begin{equation*}
    \begin{aligned}
        \ex(n, P_6^2) = \left\lfloor \frac{n^2}{4} \right\rfloor + f(n) = t(n,2) + f(n).
    \end{aligned}
    \end{equation*}
\end{theorem}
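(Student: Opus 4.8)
The plan is to prove both directions around a single embedding lemma that describes exactly when $P_6^2$ lives inside a complete bipartite graph carrying extra edges in one part, and then to use induction on $n$ for the upper bound.

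I would first record the following \emph{key lemma}: if $A,B$ are the parts of a complete bipartite graph with $|A|\ge 4$, $|B|\ge 2$ and $H$ is any graph placed inside $A$, then $K_{A,B}+H$ contains $P_6^2$ if and only if $H$ contains a path on four vertices. For the forward direction note that $P_6^2$ is non-bipartite, so in any embedding the vertices sent into $B$ form an independent set of $P_6^2$; since $\alpha(P_6^2)=2$ at least four vertices land in $A$, and a short check shows that deleting any independent set of size at most two from $P_6^2$ leaves a graph containing a $P_4$, whence $H\supseteq P_4$. Conversely, a path $a_1a_2a_3a_4$ in $A$ together with distinct $b_1,b_2\in B$ realizes $P_6^2$ via $v_1,\dots,v_6\mapsto b_1,a_1,a_2,b_2,a_3,a_4$, as one checks that all nine edges of $P_6^2$ map either to the three path edges $a_1a_2,a_2a_3,a_3a_4$ or to crossing pairs. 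Since a largest graph on $m$ vertices containing no $P_4$ (a disjoint union of triangles and stars) has $m$ edges when $3\mid m$ and $m-1$ edges otherwise, placing such a graph inside the part $A$ of a nearly balanced complete bipartite graph and choosing $|A|$ to be the value closest to $n/2$ that is divisible by $3$ whenever this is profitable yields, after optimizing over the residue of $n$ modulo $6$, a $P_6^2$-free graph with exactly $\lfloor n^2/4\rfloor+f(n)$ edges. This settles the lower bound and identifies the extremal construction.

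For the upper bound I would induct on $n$, writing $h(n):=\lfloor n^2/4\rfloor+f(n)$. Given a $P_6^2$-free graph $G$ and a vertex $v$ of minimum degree $\delta$, the graph $G-v$ is $P_6^2$-free, so $e(G)\le h(n-1)+\delta$; hence it suffices to prove the \emph{forcing lemma} that every $P_6^2$-free graph satisfies $\delta(G)\le h(n)-h(n-1)$. A direct computation gives $h(n)-h(n-1)=\lfloor n/2\rfloor+\bigl(f(n)-f(n-1)\bigr)$ with $f(n)-f(n-1)\in\{0,1\}$, so the hardest instance is $h(n)-h(n-1)=\lfloor n/2\rfloor$ (occurring for $n\equiv 0,1,2\pmod 6$), where the forcing lemma must show that minimum degree at least $\lfloor n/2\rfloor+1$ already forces a copy of $P_6^2$. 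Note that $K_{\lfloor n/2\rfloor,\lceil n/2\rceil}$ has minimum degree $\lfloor n/2\rfloor$ and is $P_6^2$-free, so the threshold $\lfloor n/2\rfloor+1$ is exactly the right one.

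The crux of the whole argument is this tight forcing lemma. A naive greedy embedding of $P_6^2$, repeatedly extending into a common neighbourhood of the last two chosen vertices while avoiding the at most five already-used vertices, needs common neighbourhoods of size at least six and therefore only delivers the weaker threshold $\delta\ge n/2+3$ (via $|N(u)\cap N(w)|\ge 2\delta-n$). Closing the gap down to $\lfloor n/2\rfloor+1$ is where I expect the real work to be: one must analyze the near-extremal regime directly, showing that a $P_6^2$-free graph whose minimum degree is just above $\lfloor n/2\rfloor$ is forced to be essentially complete bipartite together with a $P_4$-free graph inside one part, at which point the key lemma caps the surplus at $f(n)$. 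This near-threshold structural analysis, the verification of the finitely many small base cases (recalling that $n=5$ is exceptional), and the careful tracking of equality needed to pin down the extremal graphs are the genuinely delicate steps; by contrast the construction and the embedding lemma above are routine.
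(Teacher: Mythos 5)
This statement is Theorem~\ref{thm: katona P62}, which the paper quotes from \cite{XIAO20221} without proof, so there is no in-paper argument to compare against; your proposal has to stand on its own. Its positive content is sound: the embedding lemma (that $K_{A,B}$ plus a graph $H$ inside $A$ contains $P_6^2$ if and only if $H$ contains a $P_4$, using $\alpha(P_6^2)=2$ and the map $v_1,\dots,v_6\mapsto b_1,a_1,a_2,b_2,a_3,a_4$) is correct and I verified all nine edges, and the resulting lower bound $\lfloor n^2/4\rfloor+f(n)$ from optimizing the $P_4$-free graph (disjoint triangles and stars) inside one part checks out in every residue class modulo $6$.

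The gap is that the upper bound --- the actual content of the theorem --- is not proved. You reduce it to a ``forcing lemma'' asserting that, for $n\equiv 0,1,2\pmod 6$, minimum degree $\lfloor n/2\rfloor+1$ in a $P_6^2$-free graph already forces a copy of $P_6^2$, and you then state explicitly that closing the gap from the greedy threshold down to $\lfloor n/2\rfloor+1$ ``is where I expect the real work to be,'' to be done by a near-extremal structural analysis that you do not carry out. That forcing lemma at the exact threshold, together with the stability step showing a near-extremal $P_6^2$-free graph must be complete bipartite with all surplus edges confined to one part (a priori it could miss cross edges or have edges in both parts, a case your key lemma does not cover), is the entire difficulty of the theorem; deferring it leaves the proof essentially where it started. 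There is also an unresolved base-case problem in the induction: since $\ex(5,P_6^2)=10>h(5)=9$ (as $K_5$ is $P_6^2$-free), the step $e(G)\le h(n-1)+\delta$ fails at $n=6$, where $H_6^3$ has $12=h(6)$ edges and minimum degree $3>h(6)-\ex(5,P_6^2)$, so $n=6$ (at least) must be settled by hand before the induction can start. You flag both issues but resolve neither, so the proposal is an incomplete reduction rather than a proof.
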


\begin{figure}
    \centering
    \includegraphics[width=\linewidth]{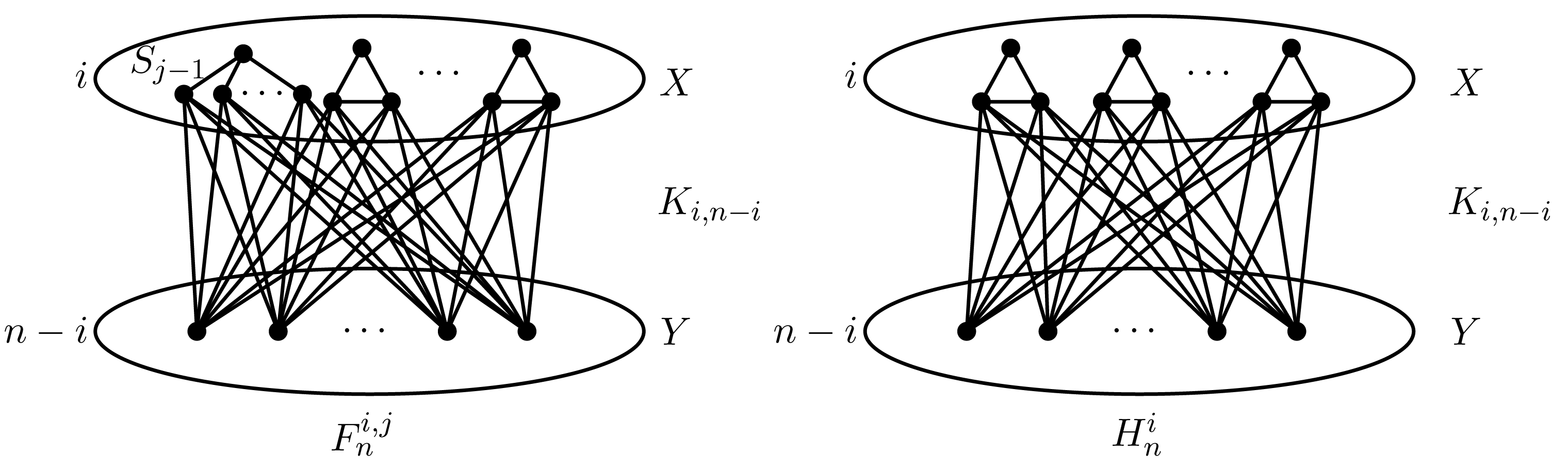}
    \caption{Graphs $F_{n}^{i,j}$ and $H_{n}^{i}$.}\label{fig: Fnij Hni}
\end{figure}

Moreover, they characterized all extremal graphs for $P_6^2$.
\begin{definition}\label{def: Fnij Hni}
    Let $S_k$ denote the star with $k+1$ vertices. 
    Suppose $3 \nmid i$, $3 \mid (i-j)$ and $1 \le j \le i$. Let $F_{n}^{i,j}$ be the graph obtained by adding an $S_{j-1}$ in the class $X$ of size $i$ of $K_{i,n-i}$ and $(i-j)/3$ vertex disjoint triangles in $X$.
    If $3 \mid i$, let $H_{n}^{i}$ be the graph obtained by adding $i$ vertex disjoint triangles in the class $X$ of $K_{i,n-i}$.   
    See Figure~\ref{fig: Fnij Hni} for examples of $F_{n}^{i,j}$ and $H_{n}^{i}$.
\end{definition}

\begin{theorem}[\cite{XIAO20221}]\label{thm: katona P62 extremal structure}
    Let $n \ge 6$ be a natural number.
    Then extremal graphs for $P_6^2$ are the following ones~($j$ can have all values satisfying the conditions $1 \le j \le i$ and $3 \mid (i-j)$).
    \begin{itemize}
        \item When $n \equiv 1 \pmod 6$, then $F_{n}^{\lceil n/2 \rceil, j}$ and $H_{n}^{\lfloor n/2 \rfloor}$.
        \item When $n \equiv 2 \pmod 6$, then $F_{n}^{ n/2 , j}$ and $F_{n}^{n/2 +1, j}$.
        \item When $n \equiv 3 \pmod 6$, then $F_{n}^{\lceil n/2 \rceil, j}$ and $H_{n}^{\lceil n/2 \rceil +1}$.
        \item When $n \equiv 0,4,5 \pmod 6$, then $H_{n}^{n/2}$, $H_{n}^{n/2+1}$ and $H_{n}^{\lceil n/2 \rceil}$, respectively.
    \end{itemize}
\end{theorem}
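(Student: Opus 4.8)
The plan is to prove the two directions separately, both resting on a single local feature of $P_6^2$: every independent set of $P_6^2$ has size at most $2$, and the complement of any independent set of $P_6^2$ (hence any set of at least four vertices meeting no non-edge) spans a copy of $P_4$. The second fact is checked by inspecting the six non-edges of $P_6^2$: deleting the two endpoints of a non-edge always leaves one of $P_4$, the paw, or $K_4-e$, each of which contains a $P_4$.

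For the forward direction, in each of $F_n^{i,j}$ and $H_n^{i}$ the class $Y$ of size $n-i$ is independent and completely joined to $X$, while $G[X]$ is a disjoint union of triangles and at most one star and so contains no $P_4$. If a copy of $P_6^2$ existed, then since $Y$ is independent at most two of its vertices could lie in $Y$, so at least four would lie in $X$; by the local fact those four span a $P_4$ inside $G[X]$, a contradiction. Hence the constructions are $P_6^2$-free, and a direct count gives $e(F_n^{i,j})=i(n-i)+(i-1)$ and $e(H_n^{i})=i(n-i)+i$, which one checks equals $t(n,2)+f(n)$ (Theorem~\ref{thm: katona P62}) for the admissible $i$ in each residue class of $n\bmod 6$.

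For the converse, let $G$ be $P_6^2$-free with $e(G)=t(n,2)+f(n)$. The first step, and the crux, is to recover the global shape: a partition $V(G)=X\cup Y$ with $Y$ independent and completely joined to $X$. I would obtain this by stability and cleaning: since $e(G)$ exceeds $t(n,2)$ by only the linear term $f(n)$, $G$ is $o(n^2)$-close to the Tur\'an graph $T(n,2)$, and one upgrades ``close'' to ``exact'' using extremality, as any vertex of $X$ missing an edge to $Y$, or any edge inside $Y$, must be paid for by internal edges of $X$, whose number is capped by $P_6^2$-freeness. Once the complete join is in place, two vertices of $Y$ serve as the outer pair of a would-be $P_6^2$, so any $P_4$ in $G[X]$ completes a copy of $P_6^2$; therefore $G[X]$ is $P_4$-free. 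A connected $P_4$-free graph is a star or a triangle, so $G[X]$ is a disjoint union of stars and triangles, and it remains to optimize. Merging two stars into one strictly increases $e(G[X])$ while preserving $P_4$-freeness, so at most one star survives; and since a triangle contributes one edge per vertex against a star's strictly smaller ratio, all other components are triangles. Writing $i=j+(i-j)$ with the star on $j$ vertices and the remaining $i-j$ vertices partitioned into triangles forces $3\mid(i-j)$, producing exactly $F_n^{i,j}$ (when $3\nmid i$) and $H_n^{i}$ (when $3\mid i$) of Definition~\ref{def: Fnij Hni}. Comparing the totals $i(n-i)+(i-1)$ and $i(n-i)+i$ for the one or two values of $i$ nearest $n/2$ in each residue class---which turn out to tie---reproduces the lists in the statement.

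I expect the global-structure step to be the main obstacle. The reduction to ``$G[X]$ is $P_4$-free,'' the classification of $P_4$-free graphs, and the edge optimization are all routine once the complete bipartite-like skeleton is available; but extracting that skeleton from nothing more than the extremal edge count and $P_6^2$-freeness is genuinely global, and a naive choice of maximum independent set need not be completely joined to its complement. Making the stability-to-exactness passage rigorous---or replacing it by an induction on $n$ that peels off a triangle or a vertex dominating one side while tracking the behaviour of $f(n)$---is where the real work lies.
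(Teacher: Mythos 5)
This statement is Theorem~\ref{thm: katona P62 extremal structure}, which the paper imports from Xiao--Katona--Xiao--Zamora~\cite{XIAO20221} without proof, so there is no in-paper argument to compare against; I therefore assess your attempt on its own terms. Your forward direction is sound: the observation that $P_6^2$ has independence number $2$ and that deleting the endpoints of any of its six non-edges leaves a $P_4$ does show that any graph consisting of a $P_4$-free graph on $X$ completely joined to an independent set $Y$ is $P_6^2$-free, and the edge counts $i(n-i)+i-1$ and $i(n-i)+i$ match $t(n,2)+f(n)$ for the listed parameters. The reduction of the converse to ``$G[X]$ is $P_4$-free,'' the classification of connected $P_4$-free graphs as stars or triangles, and the optimization over the number of star components (edges equal $|X|$ minus the number of star components, so at most one star survives) are also correct.

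The genuine gap is the step you yourself flag: recovering the partition $V(G)=X\cup Y$ with $Y$ independent and completely joined to $X$ from nothing but $P_6^2$-freeness and the edge count $t(n,2)+f(n)$. As written this step is both unproved and circular: the ``payment'' you invoke --- that a missing cross edge or an edge inside $Y$ is compensated by internal edges of $X$ whose number is ``capped by $P_6^2$-freeness'' --- relies on $G[X]$ being $P_4$-free, which you only derive \emph{after} the complete join to a large independent $Y$ is in place. Moreover, a stability argument gives closeness to $T(n,2)$ only up to $o(n^2)$ and only for $n$ sufficiently large, whereas the theorem is an exact characterization for every $n\ge 6$; the excess over $t(n,2)$ here is merely the linear term $f(n)$, so upgrading ``$o(n^2)$-close'' to ``exactly a complete join'' requires a genuinely new argument (in \cite{XIAO20221} this is done by a direct structural analysis, not by stability). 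Until that skeleton is established rigorously for all $n\ge 6$, the converse --- and hence the characterization of extremal graphs --- is not proved.
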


The more general case $P_k^p$ was solved by Yuan~\cite{YUAN2022103548} when $n$ is sufficiently large as follows.

\begin{theorem}[\cite{YUAN2022103548}]\label{thm: yuan longtu}
Let $n$ be sufficiently large and $s = 2 \lfloor k/(p+1) \rfloor + j$ where $j=1$ if $k \equiv p \pmod p+1$ and $j=0$ otherwise.
Then
\[
    \ex(n, P_k^p) = \max \{ \ex(n_a,P_s) + n_{a}n_{b} + t(n_b,p-1) ~:~ n_a+n_b = n \},
\]
where $t(n_b,p-1)$ is the number of edges of the Tur\'an graph $T_{n_b,p-1}$. Moreover, all extremal graphs are of the form of a complete $p$-partite graph with an extremal graph for $P_s$ added to one of the parts.
\end{theorem}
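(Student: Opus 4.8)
The plan is to prove the two inequalities separately: a construction giving the lower bound, whose $P_k^p$-freeness reduces to a clean combinatorial lemma about colourings of $P_k^p$, and a matching upper bound obtained from the Erd\H{o}s--Stone--Simonovits theorem together with stability and a Simonovits-type exact structural analysis driven by the decomposition family of $P_k^p$.

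\textbf{Lower bound.} For each $n_a$ with $n_b := n-n_a$, I would take $G^{*}(n_a)$ to be the complete $p$-partite graph whose parts are one class $A$ of size $n_a$ and $p-1$ further classes splitting the other $n_b$ vertices as equally as possible, and then insert inside $A$ a fixed $P_s$-free graph with $\ex(n_a,P_s)$ edges; this has exactly $\ex(n_a,P_s)+n_a n_b+t(n_b,p-1)$ edges, so maximizing over $n_a$ reproduces the claimed quantity. To see $G^{*}(n_a)$ is $P_k^p$-free, suppose $\phi$ embedded $P_k^p$ and set $U_1=\phi^{-1}(A)$ and $U_2,\dots,U_p$ equal to the preimages of the other parts. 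Each $U_i$ with $i\ge 2$ is independent in $P_k^p$, while $P_k^p[U_1]$ is a subgraph of the inserted $P_s$-free graph, hence $P_s$-free. Thus everything rests on the following lemma: in any partition $V(P_k^p)=U_1\cup\dots\cup U_p$ with $U_2,\dots,U_p$ independent, $P_k^p[U_1]$ contains $P_s$. I would prove this by identifying $V(P_k^p)$ with $\{1,\dots,k\}$, where $i\sim j$ iff $0<|i-j|\le p$: an independent set meets any $p+1$ consecutive integers at most once, so the $p-1$ sets $U_2,\dots,U_p$ cover at most $p-1$ of any $p$ consecutive integers, forcing $U_1$ to meet every window of $p$ consecutive integers. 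Listing $U_1=\{a_1<\dots<a_m\}$, consecutive gaps satisfy $a_{t+1}-a_t\le p$, so $a_1a_2\cdots a_m$ is a path in $P_k^p[U_1]$; counting two elements of $U_1$ per complete block of $p+1$ integers, plus one more when $k\equiv p \pmod{p+1}$, gives $m\ge 2\lfloor k/(p+1)\rfloor+j=s$, whence $P_k^p[U_1]\supseteq P_m\supseteq P_s$.

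\textbf{Upper bound.} Since the first $p+1$ vertices of $P_k^p$ form a clique and the cyclic map $i\mapsto i\bmod(p+1)$ is a proper colouring, $\chi(P_k^p)=p+1$. I would therefore start from the Erd\H{o}s--Stone--Simonovits theorem, which gives $\ex(n,P_k^p)=t(n,p)+o(n^2)$, and then invoke the Erd\H{o}s--Simonovits stability theorem to obtain, for any extremal graph $G$, a partition $V(G)=V_1\cup\dots\cup V_p$ differing from $T(n,p)$ in only $o(n^2)$ edges. A cleaning step---removing the $o(n)$ vertices of small cross-degree and relocating each surviving vertex to the part where it has fewest neighbours---reduces matters to the case where $G$ is a complete $p$-partite graph together with sparse internal graphs $G_1,\dots,G_p$ and all parts of size $(1+o(1))n/p$. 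The combinatorial lemma above, applied with one part playing the role of $U_1$, immediately forces each $G_i$ to be $P_s$-free, since otherwise a part carrying a $P_s$ together with independent sets in the remaining (still large) parts would yield a copy of $P_k^p$.

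\textbf{Main obstacle.} The hard part will be the exact structural step: promoting the $o(n^2)$ stability estimate to the precise statement that the internal edges live in a single part. Here I would analyse the full decomposition family of $P_k^p$, that is, all partitions of $V(P_k^p)$ into $p$ classes with prescribed induced subgraphs, and show that nontrivial internal edges cannot be spread over two parts: if two parts both contained a suitably structured piece---for $P_6^2$ already two triangles, realized by routing the blocks $\{1,2,3\}$ and $\{4,5,6\}$ into the two parts with all cross-edges present---one could reassemble $P_k^p$. Granting this, internal edges occur only in one class $A$, which is $P_s$-free, while the remaining $p-1$ classes are independent and balanced into $T(n_b,p-1)$ to maximize cross-edges; optimizing $\ex(n_a,P_s)+n_a n_b+t(n_b,p-1)$ over $n_a+n_b=n$, where the optimum lies within an additive constant of the balanced split because the quadratic cross-term dominates the linear $\ex(n_a,P_s)=\Theta(n_a)$, then gives the exact value and the extremal characterization. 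Finally I would use a Simonovits-style progressive induction to absorb the deleted vertices and conclude for all sufficiently large $n$; making the case analysis of shared-part configurations tight, uniformly in $p$ and $k$, is where I expect the real work to lie.
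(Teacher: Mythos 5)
This statement is background quoted from Yuan \cite{YUAN2022103548}: the paper states it for context in the introduction and never proves it (the main argument only uses Theorems~\ref{thm: katona P62} and~\ref{thm: katona P62 extremal structure}), so there is no in-paper proof to compare against and your proposal must be judged on its own. On its own terms, your lower bound is correct and complete: the construction is the right one, and your key lemma --- in any partition $V(P_k^p)=U_1\cup\dots\cup U_p$ with $U_2,\dots,U_p$ independent, $U_1$ induces a path on at least $s$ vertices --- is proved correctly, since every window of $p+1$ consecutive integers is a clique, hence meets each independent class at most once, which bounds the gaps in $U_1$ by $p$ and yields the count $2\lfloor k/(p+1)\rfloor+j$.

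The gaps are in the upper bound. First, a local one: to conclude that each internal graph $G_i$ is $P_s$-free you need the \emph{converse} of your lemma, namely an explicit partition of $V(P_k^p)$ into one class inducing a subgraph of $P_s$ and $p-1$ independent classes. This does not follow from the lemma you proved (which bounds $U_1$ from below, not above) and is asserted without proof; it is true, but requires a construction --- e.g., placing the two $U_1$-vertices at the end of each block of $p+1$ consecutive vertices, so that the $r<p$ leftover vertices can be absorbed into the independent classes --- and should be spelled out. Second, and decisively: the entire exact step, i.e.\ ruling out internal edges spread over two parts, re-absorbing the vertices deleted during cleaning, and upgrading the $o(n^2)$ stability estimate to the precise extremal characterization, is presented as a plan (``Simonovits-style progressive induction'', ``where I expect the real work to lie'') rather than as an argument; this step is essentially the whole content of Yuan's theorem. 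Note also that your illustrative configuration for $P_6^2$ is not the relevant one: one does not need two triangles in the two parts, since a \emph{single edge in each part} of a complete bipartite graph already yields $P_6^2$ (map $\{1,4,5\}$ into one part and $\{2,3,6\}$ into the other; each set induces exactly one edge of $P_6^2$), and the exact analysis must rule out precisely such minimal shared-part configurations, uniformly in $k$ and $p$. As it stands, the proposal establishes the lower bound and correctly identifies the standard stability framework, but it is not a proof of the stated theorem.
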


Note that $P_k^2$ is actually a grid formed by triangles.
Another well-studied triangle grid is the \emph{Triangular Pyramid graph} of $k$ layers denoted by $TP_k$.
\begin{definition}\label{def: pyramid}
    The Triangular Pyramid with $k$ layers, denoted by $TP_k$, is defined as follows: Draw $k+1$ paths in layers such that the first layer is a $1$-vertex path, the second layer is a $2$-vertex path, $\ldots$, and the $(k+1)$-th layer is a $(k+1)$-vertex path. 
    Label the vertices from left to right of the $i$-th layer's path as $x_1^i$, $x_2^i$, $\ldots$, $x_i^i$, where $i \in \{1,2,\ldots,k+1\}$.
    The vertex set of $TP_k$ is the set of all vertices of the $k+1$ paths.
    The edge set consists of all edges in the paths.
    Additionally, for any consecutive layers, say the $(i-1)$-th and $i$-th layers, we add the edges $x_r^{i-1}x_r^{i}$ and $x_r^{i-1}x_{r+1}^{i}$ for each $i \in \{1,2,\ldots,k+1\}$ and $1 \le r \le i-1$.
    See Figure~\ref{fig: TP2 TP3 TP4} for examples of $TP_2$, $TP_3$ and $TP_4$.
\end{definition}

\begin{figure}
    \centering
    \includegraphics[width=0.8\linewidth]{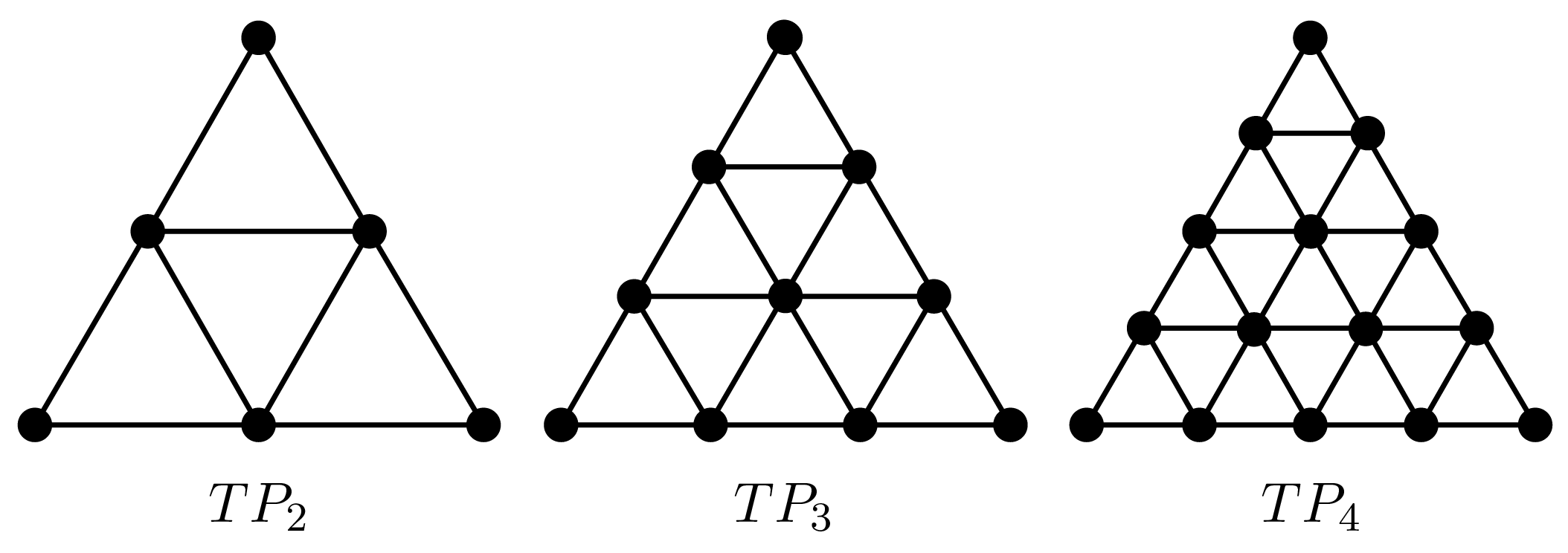}
    \caption{The Triangular Pyramid graphs: $TP_2$, $TP_3$ and $TP_4$.}\label{fig: TP2 TP3 TP4}
\end{figure}

The Tur\'an number of $TP_2$ was determined by Xiao, Katona, Xiao and Zamora~\cite{XIAO20221}, and the Tur\'an number of $TP_3$ was studied by Ghosh, Gy\H{o}ri, Paulos, Xiao and Zamora~\cite{GHOSH202275}.

\begin{theorem}[\cite{XIAO20221}]\label{thm: katona TP2}
    The maximum number of edges in an $n$-vertex $TP_2$-free graph $(n \neq 5)$ is,
    \[
    \ex(n, TP_2)= \left\{ \begin{array}{ll}
        \left \lfloor \frac{n^2}{4} \right \rfloor + \left \lfloor \frac{n}{2} \right \rfloor, & n \not \equiv 2 \pmod 4,\\
        \frac{n^2}{4} + \frac{n}{2} - 1, & n \equiv 2 \pmod 4.\\
    \end{array} \right.
    \]
\end{theorem}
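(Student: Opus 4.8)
The plan is to prove matching lower and upper bounds; write $h(n)$ for the claimed value of $\ex(n,TP_2)$.

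\textbf{Lower bound (construction).} First I would exhibit an extremal graph. Partition the $n$ vertices into $A\cup B$ with $|A|=\lfloor n/2\rfloor$ and $|B|=\lceil n/2\rceil$, take all edges between $A$ and $B$, and add a maximum matching inside $A$ and a maximum matching inside $B$. This graph has $\lfloor n^2/4\rfloor+\lfloor|A|/2\rfloor+\lfloor|B|/2\rfloor$ edges, and a short parity check gives $\lfloor|A|/2\rfloor+\lfloor|B|/2\rfloor=\lfloor n/2\rfloor$ unless $n\equiv 2\pmod 4$, in which case it equals $n/2-1$; so the edge count is exactly $h(n)$. To see it is $TP_2$-free, note every triangle has two vertices in one part joined by a matching edge and one vertex $w$ in the other part; the two ``cross'' edges of this triangle have, as their only common neighbour outside the triangle, the single vertex matched to $w$ (the internal matchings forbid any other). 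Hence two of the three edges of every triangle share all their external common neighbours, which is exactly the obstruction to embedding $TP_2$ described next.

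\textbf{Reformulation.} I would record that $TP_2$ is a triangle $xyz$ together with three further distinct vertices, one adjacent to each of the pairs $\{x,y\}$, $\{y,z\}$, $\{z,x\}$. Thus $G\supseteq TP_2$ if and only if $G$ has a triangle $xyz$ admitting a system of distinct representatives for the three sets $N(x)\cap N(y)\setminus\{z\}$, $N(y)\cap N(z)\setminus\{x\}$, $N(z)\cap N(x)\setminus\{y\}$. By Hall's theorem, $TP_2$-freeness is the statement that for every triangle these three ``edge-neighbourhoods'' are Hall-deficient: some one is empty, some two coincide in a single vertex, or all three lie in a common pair of vertices. This local condition is the engine of the upper bound.

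\textbf{Upper bound.} I would argue by induction on $n$, checking the small cases by hand (this is where $K_5$-type graphs occur, and where the sporadic $n=5$ exception lives). The increments $h(n)-h(n-1)$ equal $\tfrac{n-1}{2},\ \tfrac n2,\ \tfrac{n+1}{2},\ \tfrac n2+1$ according as $n\equiv 1,2,3,0\pmod 4$, all of order $n/2$. If $G$ has a vertex $v$ with $\deg(v)\le h(n)-h(n-1)$, delete it: $G-v$ is $TP_2$-free, so $e(G)\le e(G-v)+\deg(v)\le h(n-1)+\bigl(h(n)-h(n-1)\bigr)=h(n)$. It remains to treat the dense case $\delta(G)>h(n)-h(n-1)\approx n/2$. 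Here I would fix a maximum cut $(X,Y)$; since the cross edges never exceed $|X|\,|Y|\le\lfloor n^2/4\rfloor$, it suffices to bound the within-part edges and in fact to show $G[X]$ and $G[Y]$ are essentially matchings. If some $v\in X$ had two neighbours $u,w\in X$, then using $\delta(G)\ge n/2$ together with the max-cut property to manufacture enough common neighbours across the cut, one would locate a system of distinct representatives for a triangle through $v$, contradicting the local condition. Optimising $|X|\,(n-|X|)+\lfloor|X|/2\rfloor+\lfloor(n-|X|)/2\rfloor$ over the balance of the cut then returns exactly $h(n)$, the odd-part parity loss producing the $-1$ when $n\equiv 2\pmod 4$.

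\textbf{Main obstacle.} The hard part is the dense case: upgrading ``$\delta(G)\gtrsim n/2$ and $TP_2$-free'' to the rigid bipartite-plus-matching structure. One must apply the Hall-deficiency condition simultaneously to the many triangles forced by the high minimum degree in order to rule out a part carrying more than a matching, and one must separately analyse near-equality, where dense $TP_2$-free graphs containing a $K_5$---which cannot be split into two induced matchings---tie the construction for small $n$ but are wasteful for large $n$. Pinning down the exact additive constant, in particular the parity-driven $-1$ for $n\equiv 2\pmod 4$, is where the bulk of the casework concentrates.
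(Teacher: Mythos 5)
This statement is Theorem~\ref{thm: katona TP2}, which the paper quotes from Xiao, Katona, Xiao and Zamora~\cite{XIAO20221} without proof, so there is no in-paper argument to compare against; I can only assess your proposal on its own terms. Your lower bound is complete and correct: the edge count of $K_{\lfloor n/2\rfloor,\lceil n/2\rceil}$ plus a maximum matching in each part checks out in all four residues mod $4$, and your verification of $TP_2$-freeness (every triangle uses exactly one matching edge, and the two cross edges of that triangle have only the matching partner of the cross vertex as a common external neighbour, so the three edge-neighbourhoods are Hall-deficient) is sound. The reformulation of containment of $TP_2$ as the existence of a triangle whose three edge-neighbourhoods admit a system of distinct representatives is also correct, since $TP_2$ is exactly a central triangle with three distinct outer vertices, one over each edge.

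The gap is the upper bound, which is the entire content of the theorem. After the degree dichotomy you are left with $\delta(G)>n/2$, and the step ``if $v\in X$ has two neighbours in $X$, use the max-cut property to manufacture common neighbours and locate an SDR, contradiction'' is stated as an intention rather than proved; this is precisely where all the work lies. Note that Hall's condition for three sets fails not only when a set is empty but also when two sets are the same singleton or all three sit inside a common pair, so high minimum degree alone does not immediately produce an SDR --- you must rule out these degenerate coincidences for \emph{some} triangle through $v$, and then upgrade ``$G[X]$ has no vertex of degree $2$'' to ``$G[X]$ is exactly a matching'' to extract the parity loss of $1$ when both parts are odd. There is also a concrete flaw in the induction scaffolding: since $\ex(5,TP_2)=10>h(5)=8$ (as $K_5$ is trivially $TP_2$-free), the deletion step from $n=6$ gives only $e(G)\le 10+\deg(v)$, which does not yield $h(6)=11$ unless $\deg(v)\le 1$; so $n=6$, and in cascade possibly several further small values, must be settled by hand before the induction can start, and you have not identified where it becomes valid. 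As written, the proposal is a plausible programme with a verified construction, not a proof of the upper bound.
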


\begin{theorem}[\cite{GHOSH202275}]\label{thm: ghosh TP3}
    The maximum number of edges in an $n$-vertex $TP_3$-free graph is,
    \[
        \ex(n,TP_3) = \frac{1}{4}n^2 + n + o(n).
    \]
\end{theorem}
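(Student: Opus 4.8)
The plan is to prove matching upper and lower bounds, with the lower bound coming from an explicit construction and the upper bound from a stability analysis that reduces the problem to controlling the edges lying \emph{inside} the two sides of a near-optimal bipartition. The key structural observation throughout is that $TP_3$ is $3$-chromatic and contains a triangle, so by Erd\H{o}s--Stone--Simonovits $\ex(n,TP_3)=(1+o(1))n^2/4$ and the whole content of the theorem is the second-order term.

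For the lower bound I would start from a balanced complete bipartite graph $K_{\lceil n/2\rceil,\lfloor n/2\rfloor}$, which already has $\lfloor n^2/4\rfloor$ edges, and then add a sparse graph $S$ inside one part $X$. Because every cross pair is already an edge, a copy of $TP_3$ appears in the resulting graph exactly when $S$ contains a copy of $TP_3[V(TP_3)\setminus I]$ for some independent set $I$ of $TP_3$ (the vertices of $I$ are placed in the other part and all of their incident edges are supplied for free by the complete bipartite connection). Thus $S$ must avoid the finite family $\mathcal F=\{\,TP_3[V(TP_3)\setminus I]\;:\;I\text{ independent in }TP_3\,\}$, and the lower-order term is obtained by taking $S$ to be an extremal $\mathcal F$-free graph on $|X|$ vertices. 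Concretely, I would first determine $\alpha(TP_3)$ and the induced graphs $TP_3[V(TP_3)\setminus I]$, identify the member(s) of $\mathcal F$ with the smallest Tur\'an number (the binding constraint), and then verify by a finite case analysis that the corresponding extremal choice of $S$ keeps the whole graph $TP_3$-free, giving the claimed number of edges.

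For the upper bound I would invoke stability: an $n$-vertex $TP_3$-free graph with $n^2/4-o(n^2)$ edges admits a partition $V=A\cup B$ with only $o(n^2)$ edges inside the parts and $o(n^2)$ non-edges across. After a cleaning step that relocates or deletes the $o(n)$ badly placed vertices, I may assume every remaining vertex has about $n/2$ neighbours on the opposite side, so that any two vertices on the same side have a common cross-neighbourhood of size $\Theta(n)$. Writing $e(G)=e(A,B)+e(A)+e(B)$ and using $e(A,B)\le |A||B|\le\lfloor n^2/4\rfloor$, it then suffices to bound the number of \emph{excess} edges $e(A)+e(B)$ by the desired lower-order term.

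The heart of the argument, and the step I expect to be the main obstacle, is exactly this bound on the excess edges. Each excess edge $uv$ inside a part, together with its $\Theta(n)$ common cross-neighbours, spans a large book, and the difficulty is to show that once there are too many such edges one can assemble them, through the dense bipartite part, into a copy of $TP_3$; equivalently, the excess graph must avoid robust copies (ones with large common neighbourhoods) of the configurations in the family $\mathcal F$ above. Turning this combinatorial forbidden-configuration statement into the precise extremal count is delicate: it requires pinning down the correct binding member of $\mathcal F$, controlling how the $o(n^2)$ missing cross-edges could obstruct an embedding (handled by always keeping many candidate apex vertices), and matching the resulting constant against the construction. I would expect the bookkeeping that upgrades the soft stability estimate to the sharp lower-order term to be the most technical part, whereas the conceptual core is the clean reduction to an $\mathcal F$-free extremal problem inside a single part.
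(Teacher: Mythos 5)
This statement is quoted from Ghosh, Gy\H{o}ri, Paulos, Xiao and Zamora~\cite{GHOSH202275}; the present paper contains no proof of it, so there is nothing internal to compare your argument against, and I can only judge the proposal on its own terms. As it stands it is a plan rather than a proof, and the two steps that actually produce the number $n$ in the second-order term are both left open. For the lower bound you reduce to finding the extremal $\mathcal F$-free graph $S$ inside one part, but you never determine $\mathcal F$, never identify its binding member, and never compute the resulting edge count; without that, the construction yields only ``$n^2/4$ plus something,'' not $n^2/4+n$. Moreover, restricting the extra edges to a single part may be lossy: if the binding member of $\mathcal F$ forces $e(S)=O(|X|)$, you would want to add edges inside \emph{both} parts to reach $+n$, and then copies of $TP_3$ can use non-cross edges on both sides simultaneously, which your one-sided family $\mathcal F$ does not account for.

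The more serious gap is in the upper bound. The theorem pins the answer down to an additive $o(n)$, but the cleaning step you describe --- relocating or deleting the $o(n)$ badly placed vertices produced by stability --- changes the edge count by up to $o(n)\cdot n=o(n^2)$, which completely swamps the $n+o(n)$ precision you need. Generic stability gives $n^2/4+o(n^2)$ for free (as you note via Erd\H{o}s--Stone); upgrading this to an error of $O(n)$ requires a minimum-degree or vertex-by-vertex deletion argument in which every discarded vertex is charged against the target bound exactly, together with the unproved combinatorial lemma that a linear number of ``robust'' excess edges inside the parts forces a copy of $TP_3$ through the dense bipartite structure. You correctly flag this as the heart of the matter, but flagging it is not proving it: the entire content of the theorem beyond Erd\H{o}s--Stone lives in precisely the steps you have deferred.
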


The generalized Tur\'an number of such triangle grids was also studied recently.
The maximum number of triangles in a $P_5^2$-free graph was determined by Mukherjee~\cite{MUKHERJEE2024113866} as follows.

\begin{theorem}[\cite{MUKHERJEE2024113866}]\label{thm: triangle in P52 Sp4}
    For $n \ge 8$, the maximum number of triangles in an $n$-vertex $P_5^2$-free graph is
    \[
        \ex(n, K_3, P_5^2) = \left\lfloor \frac{n^2}{8} \right\rfloor.
    \]
    For $n=4,5,6,7$, the values of $\ex(n, K_3, P_5^2)$ are $4,4,5,8$, respectively.
\end{theorem}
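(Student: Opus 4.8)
The plan is to prove the value $\lfloor n^2/8\rfloor$ for all $n\ge 8$ by a matching construction and upper bound, handling $n\in\{4,5,6,7\}$ by direct inspection.

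For the lower bound I would exhibit a $P_5^2$-free graph with $\lfloor n^2/8\rfloor$ triangles. Partition the vertices into a set $A$ of size $2p$ carrying a perfect matching $\{x_iy_i\}_{i=1}^{p}$ and an independent set $B$ of size $n-2p$, and join every vertex of $A$ to every vertex of $B$. Each triangle is formed by one matching edge $x_iy_i$ together with one vertex of $B$, so there are exactly $p(n-2p)$ triangles; choosing the integer $p\approx n/4$ to maximise $p(n-2p)$ gives $\lfloor n^2/8\rfloor$. The graph is $P_5^2$-free for the reason explained next: the neighbourhood of every vertex of $A$ induces a single star $K_{1,|B|}$ and that of every vertex of $B$ induces a matching, and neither contains a path on four vertices.

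The engine of the upper bound is the identity $P_5^2=K_1+P_4$: the square of $P_5$ is the join of one vertex with a path on four vertices. Hence $G$ is $P_5^2$-free if and only if, for every vertex $u$, the graph $G[N(u)]$ contains no $P_4$ as a subgraph; and it is elementary that a connected graph with no $P_4$-subgraph is a star or a triangle, so $G[N(u)]$ is a disjoint union of stars and triangles. Since the triangles through $u$ are exactly the edges of $G[N(u)]$, writing $s(u)$ for the number of star components of $G[N(u)]$ (isolated vertices counted as stars $K_{1,0}$) we get $e(G[N(u)])=d(u)-s(u)$, and summing over all $u$ yields, with $t$ the number of triangles of $G$,
\[
3t=\sum_{u\in V(G)}e\bigl(G[N(u)]\bigr)=\sum_{u\in V(G)}\bigl(d(u)-s(u)\bigr)=2e(G)-\sum_{u\in V(G)}s(u).
\]
This identity is tight for the construction above, and it reduces the theorem to the inequality $2e(G)-\sum_u s(u)\le \tfrac38 n^2$; the whole difficulty is that many edges (which push $t$ up) must be shown to force many star components in the neighbourhoods (which pull $t$ back down). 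Two structural lemmas feed this: first, if an edge lies in at least three triangles then its common neighbourhood is independent, so such an edge is the spine of a book with independent pages; second, every $K_4$ on $\{u,a,b,c\}$ is almost isolated, meaning that no outside vertex is adjacent to two of $u,a,b,c$, since such a vertex would lie in the link of one of the four and raise a vertex of the triangle formed there by the other three to degree three, contradicting the star/triangle structure.

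To finish I would induct on $n$, with $n=8$ as the base case (the bound $t\le 8$ checked directly via the same local structure), and for $n\ge 9$ delete a suitably chosen vertex $u$ and apply the bound at $n-1$; since $t(G)=t(G-u)+e(G[N(u)])$, it suffices to find a vertex $u$ with $e(G[N(u)])\le\lfloor n^2/8\rfloor-\lfloor (n-1)^2/8\rfloor$, a threshold of order $n/4$. When $G$ has a vertex of degree at most about $n/4$ this is immediate from $e(G[N(u)])\le d(u)$, so the real content — and the main obstacle — is the dense regime in which every degree exceeds $n/4$: there I would use the independence of pages on heavy edges together with the near-isolation of $K_4$'s to argue that the edge surplus is necessarily accompanied by enough sparse, matching-like neighbourhoods, forcing some vertex to have triangle-degree at most about $n/4$, exactly as the page vertices $B$ do in the extremal graph. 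Making this trade-off quantitatively sharp, and aligning it with the floor $\lfloor n^2/8\rfloor$ across the induction, is where the bulk of the work lies.
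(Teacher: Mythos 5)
This theorem is quoted from Mukherjee~\cite{MUKHERJEE2024113866} and is not proved anywhere in the present paper, so there is no in-paper proof to compare against; your proposal can only be judged on its own terms. The parts you carry out are correct: the lower-bound graph (a complete bipartite graph with a perfect matching embedded in the part of size $2p$) has exactly $p(n-2p)$ triangles, which maximizes over integers $p$ to $\lfloor n^2/8\rfloor$, and it is $P_5^2$-free because $P_5^2$ is the join of its middle vertex with a $P_4$, so $P_5^2$-freeness is equivalent to every neighbourhood inducing a $P_4$-subgraph-free graph, i.e.\ a disjoint union of stars and triangles. The identity $3t=2e(G)-\sum_u s(u)$ and your two structural lemmas (independent common neighbourhood for an edge in at least three triangles; no outside vertex adjacent to two vertices of a $K_4$) are also correct.

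The genuine gap is that the upper bound, which is the entire content of the theorem, is not established. You reduce it to finding in every $P_5^2$-free graph on $n\ge 9$ vertices a vertex $u$ with $e(G[N(u)])\le\lfloor n^2/8\rfloor-\lfloor (n-1)^2/8\rfloor$, a threshold equal to $2$ already at $n=9$ and roughly $n/4$ in general, and you explicitly defer the argument for the dense regime. This is not a routine technicality: the induction step has no slack (in the extremal graph the minimum triangle-degree equals the threshold exactly), and your two lemmas do not by themselves force a low-triangle-degree vertex. For instance, in a graph where every neighbourhood is a perfect matching there are no heavy edges and no $K_4$'s, yet $s(u)=d(u)/2$ for all $u$, so your identity only gives $3t=e(G)$, i.e.\ $t\le\binom{n}{2}/3\approx n^2/6$, which is weaker than $n^2/8$; ruling out such dense locally sparse configurations requires an additional quantitative argument that the proposal does not supply. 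Until that trade-off between edges and star components is actually proved, what you have is a plausible outline with a complete construction, not a proof.
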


The maximum number of triangles in a $TP_2$-free graph was studied by Lv, Gy\H{o}ri, He, Salia, Tompkins, Varga and Zhu~\cite{LV2024113682}.

\begin{theorem}[\cite{LV2024113682}]\label{thm: lv edge blow up of triangle}
    Let $n \ge 22$ be an integer. We have
    \[
        \ex(n, K_3, TP_2) \le \frac{n^2}{4}-1 + \mathbb{1}_{4 \mid n},
    \]
    where $\mathbb{1}_{4 \mid n} = 1$ if $4 \mid n$ and $0$ otherwise.
    Furthermore, equality holds when $n$ is even and the unique extremal graph is obtained by taking a complete graph $K_{n/2, n/2}$ and embedding a maximal matching in each part.
\end{theorem}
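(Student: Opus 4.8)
The plan is to translate $TP_2$-freeness into a local condition on triangles and then count triangles through their edges. First I would record the following reformulation. Writing $TP_2$ as a central triangle $xyz$ together with three further, distinct vertices, one adjacent to each pair $\{x,y\}$, $\{y,z\}$, $\{z,x\}$, we see that $G$ contains $TP_2$ if and only if some triangle $xyz$ of $G$ admits a system of distinct representatives for the three \emph{apex sets} $A_{xy}=(N(x)\cap N(y))\setminus\{z\}$, $A_{yz}$, and $A_{zx}$. Since there are only three sets, Hall's theorem says such a system fails precisely when one of the sets is empty, or two of them have union of size at most one, or all three have union of size at most two. Hence in a $TP_2$-free graph every triangle either has an edge lying in no other triangle (a \emph{private} edge, i.e. codegree $1$), or its edges are forced to reuse a very small pool of apexes.

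The cornerstone is the following structural lemma, which I would prove directly from this reformulation: \emph{if $uv\in E(G)$ has at least four common neighbors, then $W:=N(u)\cap N(v)$ induces a matching (maximum degree at most $1$) in $G$}. Indeed, if some $w_2\in W$ had two neighbors $w_1,w_3$ inside $W$, then, choosing a fourth vertex $w_0\in W\setminus\{w_1,w_2,w_3\}$, the central triangle $uvw_2$ would receive the three distinct apexes $w_0$ (on $uv$), $w_1$ (on $uw_2$) and $w_3$ (on $vw_2$), producing a copy of $TP_2$. This lemma is tight against the extremal construction: the base edge of each triangle there is a matching edge inside a part of $K_{n/2,n/2}$, and its common neighborhood is an entire part, which indeed induces only a matching.

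With these tools I would count triangles through their edges, using $3\,t(G)=\sum_{uv\in E(G)}c(uv)$ where $c(uv)=|N(u)\cap N(v)|$ is the codegree. Triangles possessing a private edge inject into the set of codegree-$1$ edges, so they number at most $e(G)\le \ex(n,TP_2)=n^2/4+O(n)$ by Theorem~\ref{thm: katona TP2}. Every remaining triangle has all three edges lying in at least two triangles; the lemma forces the \emph{heavy} edges among these (those with $c\ge 4$) to be the spines of books whose pages induce matchings. The target bound $n^2/4+O(1)$ then reduces to showing that these heavy books collectively carry at most about $n^2/4$ triangles, that is $\sum_{e\ \text{heavy}}c(e)\le n^2/4+O(1)$, together with a separate accounting for the triangles all of whose edges have codegree in $\{2,3\}$.

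I expect this last step to be the main obstacle. The difficulty is that heavy edges need not be vertex-disjoint: as the configuration ``three vertices joined to a common independent set'' shows, several heavy edges can share an endpoint without creating $TP_2$, so one cannot simply declare the heavy edges to form a matching. Controlling these overlapping books — simultaneously bounding the number of heavy edges seen in each link and the sizes of their common neighborhoods — is where the real work lies, and it is exactly what pins down the lower-order constant. To finish, I would first establish the asymptotically sharp bound $t(G)\le n^2/4+o(n^2)$, then run a stability argument forcing a near-extremal $TP_2$-free graph to be close to $K_{n/2,n/2}$ with a matching embedded in each part, and finally carry out a parity-sensitive local optimization — distinguishing $4\mid n$ from $n\equiv 2\pmod 4$ and handling the at most one unmatched vertex in each part — to extract the precise value $n^2/4-1+\mathbb{1}_{4\mid n}$ and the uniqueness of the extremal graph for even $n$.
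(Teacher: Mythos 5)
This statement is Theorem~\ref{thm: lv edge blow up of triangle}, which the paper imports from \cite{LV2024113682} without proof (it is used only as background and plays no role in the proof of the main theorem), so there is no internal argument to measure your proposal against; I can only assess it on its own terms. Your opening moves are sound: the reformulation of $TP_2$-containment as the existence of a system of distinct representatives for the three apex sets of a triangle is correct, and the lemma that an edge of codegree at least $4$ has a common neighbourhood inducing a matching is correctly stated and correctly proved (the three apexes $w_0,w_1,w_3$ are distinct and distinct from $u,v,w_2$, so they do yield a $TP_2$). This lemma is genuinely the right local structure, as your check against the extremal graph shows.

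The proposal nevertheless has a genuine gap, and in fact two. First, the decisive quantitative step, $\sum_{e\ \text{heavy}} c(e) \le n^2/4 + O(1)$ together with the accounting for triangles whose edges all have codegree in $\{2,3\}$, is exactly the theorem restated in different notation; you correctly identify it as ``where the real work lies'' but offer no argument for it, and no mechanism for controlling heavy edges that share endpoints. Second, the decomposition you propose cannot be summed term by term even in principle: you bound the triangles possessing a private edge by the number of codegree-$1$ edges, hence by $e(G) \le \ex(n,TP_2) = n^2/4 + \Theta(n)$, which on its own already exceeds the target $n^2/4 - 1 + \mathbb{1}_{4\mid n}$ by a $\Theta(n)$ term. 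So even a complete proof of your heavy-edge estimate would not close the argument; you would need a joint bound showing that a graph cannot simultaneously have many private-edge triangles and many heavy-book triangles, and nothing in the proposal addresses this interaction. The concluding stability argument and the parity-sensitive optimization needed for the exact constant and the uniqueness statement are likewise only announced. As it stands this is a plausible research plan, not a proof.
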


In this paper, we fully determine the maximum number of triangles in a $P_6^2$-free graph as follows.

\begin{theorem}\label{thm: main P62}
    Let 
    \begin{equation*}
    \begin{aligned}
        g(n) = \left\{ \begin{array}{ll}
            \lfloor \frac{n}{6}\rfloor, & n \equiv 0,1,4 \pmod 6, \\
            \lfloor \frac{n}{6} \rfloor - 1, & n \equiv 2,3 \pmod 6, \\
            \lfloor \frac{n}{6} \rfloor +1, & n \equiv 5 \pmod 6. \\
        \end{array} \right.
    \end{aligned}
    \end{equation*}
    Then the maximum number of triangles in an $n$-vertex~($n \ge 11$) $P_6^2$-free graph is
    \begin{equation*}
    \begin{aligned}
        \ex(n, K_3, P_6^2) = \left \lfloor \frac{n^2}{4} \right \rfloor+g(n) = t(n,2) + g(n).
    \end{aligned}
    \end{equation*}
    And the extremal graphs are the following ones:
    \begin{itemize}
        \item When $n \equiv 0,1 \pmod 6$, then $H_{n}^{ \lfloor n/2 \rfloor}$.
        \item When $n \equiv 2 \pmod 6$, then $H_{n}^{ \lfloor n/2 \rfloor - 1}$.
        \item When $n \equiv 3,4 \pmod 6$, then $H_{n}^{ \lceil n/2 \rceil +1}$.
        \item When $n \equiv 5 \pmod 6$, then $H_{n}^{ \lceil n/2 \rceil}$.
    \end{itemize}
\end{theorem}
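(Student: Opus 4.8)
The plan is to prove matching lower and upper bounds, the former by analysing the graphs $H_n^{i}$ and the latter by a local‑to‑global argument anchored on the edge‑extremal result for $P_6^2$. For the lower bound I would first check that every $H_n^{i}$ in the statement is $P_6^2$-free and then count its triangles. The freeness rests on the fact that $P_6^2$ has independence number $2$: its only independent pairs are $\{1,4\},\{3,6\},\{2,5\}$ and $\{1,5\},\{2,6\},\{1,6\}$, so a copy of $P_6^2$ inside $H_n^{i}$ places at most two vertices in the independent side $Y$ and hence at least four in $X$. Since $G[X]$ is a disjoint union of triangles it contains no $P_4$ and no triangle together with an incident edge (a paw); but deleting any one of the six independent pairs from $P_6^2$ leaves, on the remaining four vertices, at least one of these two configurations, and all the required edges among those four lie inside $X$, a contradiction. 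A direct count then gives $e(G[X])\,(n-i)+(\text{triangles in }X)=i(n-i)+i/3$ for a triangle factor on $X$, and substituting the prescribed $i$ for each residue of $n$ modulo $6$ matches $\lfloor n^2/4\rfloor+g(n)$.

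The engine for the upper bound is one clean observation: if $G$ is $P_6^2$-free then $G[N(v)]$ is $P_5^2$-free for every vertex $v$. Indeed, if $G[N(v)]$ contained a $P_5^2$ on $a_1,\dots,a_5$ (so $a_ia_j\in E(G)$ whenever $|i-j|\le2$), then since $v$ is adjacent to all of $a_1,\dots,a_5$ the ordering $(a_1,a_2,a_3,v,a_4,a_5)$ realises a $P_6^2$. I would record the immediate corollary that for every edge $uv$ the common neighbourhood $N(u)\cap N(v)$ induces a $P_4$-free graph — equivalently $G$ contains no join $K_2\vee P_4$ — because a $P_4$ among the common neighbours together with the universal vertex $v$ already produces a $P_5^2$ inside $G[N(u)]$. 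Thus each common neighbourhood is a disjoint union of stars and triangles, exactly the local shape realised by $H_n^{i}$, where a heavy edge $x_1x_2$ of the triangle factor has common neighbourhood $Y\cup\{x_3\}$, a single star.

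For the global bound I would first reduce to $G$ connected, since $\sum_i n_i^2\le n^2$ makes a single nontrivial component optimal up to the additive term. The heart of the argument is a stability step: using the corollary to control the ``heavy'' edges (those in many triangles) and invoking $e(G)\le\ex(n,P_6^2)=t(n,2)+f(n)$ together with its extremal classification into $F_n^{i,j}$ and $H_n^{i}$, I would force an independent set $Y$ of size $\approx n/2$ with $X=V(G)\setminus Y$ inducing only triangles and edges and (almost) completely joined to $Y$. Granting this shape, every triangle either lies in $X$ or uses one edge of $G[X]$ and one vertex of $Y$, so $t(G)\le e(G[X])\,|Y|+t(G[X])$; optimising over $|X|=i$ — where the choice between a pure triangle factor ($3\mid i$) and a triangle factor plus one leftover edge or vertex ($i\equiv1,2\pmod3$) interacts with the parity of $|Y|$ — reproduces $g(n)$ and singles out the claimed $H_n^{i}$ for each residue. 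The exact constant and uniqueness I expect to close by a short induction on $n$, deleting a vertex of $Y$ that lies in only $e(G[X])\le\Delta(n):=\ex(n,K_3,P_6^2)-\ex(n-1,K_3,P_6^2)$ triangles.

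The hard part will be the stability step: forcing the global bipartite‑plus‑triangle‑factor shape from the purely local information that every common neighbourhood is a disjoint union of stars and triangles. That local condition alone is satisfied by sparse non‑extremal graphs (for instance two large books glued at a common vertex), so it must be combined decisively with the edge‑extremal structure and with a global accounting of how heavy edges may share vertices without creating a $P_6^2$. A secondary difficulty is the bookkeeping of the lower‑order term across all six residues modulo $6$ and the proof that stars or larger components inside $X$ are strictly suboptimal, which is what ultimately forces a pure triangle factor and pins down the precise value of $i$.
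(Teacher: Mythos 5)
Your lower bound is fine: the verification that $H_n^{i}$ is $P_6^2$-free via the independence number of $P_6^2$ and the paw/$P_4$-freeness of a triangle factor is correct, and the count $i(n-i)+i/3$ matches the claimed value for each residue. Your local observations for the upper bound are also correct ($G[N(v)]$ is $P_5^2$-free, hence every common neighbourhood $N(u)\cap N(v)$ is $P_4$-free). But the proof has a genuine gap exactly where you flag it: the ``stability step'' that converts this local information plus the edge-extremal theorem into the global bipartite-plus-triangle-factor shape is never carried out, and it is the entire content of the theorem. As you yourself note, the local condition is satisfied by sparse non-extremal graphs, so some additional quantitative mechanism is indispensable, and you do not supply one. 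The concluding induction on $n$ is likewise only a wish: you would need $\ex(n,K_3,P_6^2)-\ex(n-1,K_3,P_6^2)$ in advance, which is what you are trying to prove, and you give no reason why a vertex of $Y$ in few triangles exists before the structure is established.

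For comparison, the paper closes this gap with two tools that have no counterpart in your proposal. First, a discharging argument proves the key inequality $t(G)\le e(G)$ for every $P_6^2$-free graph; combined with $e(G)\le t(n,2)+f(n)$ this already controls the triangle count up to the lower-order term and yields $K_5$-freeness of an extremal $G$. Second, the triangles of $G$ are grouped into \emph{blocks} (maximal edge-connected families of triangles), each block is classified into one of four explicit types, and the edges are 2-coloured so that the blue subgraph is triangle-free and each block has at most as many triangles as blue edges (plus one for $K_5^-$-blocks). Mantel's theorem applied to the blue subgraph, together with $e_r\ge 3B$, gives $t(G)\le t(n,2)+\lfloor f(n)/3\rfloor$, and the extremal characterisation then follows from a stability version of Mantel's theorem and the known edge-extremal graphs. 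If you want to salvage your route, the minimum you need is a substitute for the inequality $t(G)\le e(G)$ (or for the block/colouring decomposition), since your common-neighbourhood lemma alone bounds the number of triangles per edge but not their global total against $t(n,2)+g(n)$.
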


\section{Preliminaries}\label{sec: preliminary}

A \emph{block} of $G$ is defined as a maximal collection of triangles of $G$ with the following property: Two triangles $T$, $T'$ are in the same block if they either share an edge or there is a sequence of triangles $T,T_1,\ldots,T_s,T'$ where each triangle~(except the last one) of the sequence shares an edge with the next one.
We will characterize the blocks of $G$.

The idea of blocks was used by Ergemlidze, Győri, Methuku and Salia~\cite{ERGEMLIDZE2017395} studying the maximum number of triangles in $C_5$-free graphs.

The following theorem and propositions are used to characterize the extremal structure in Section~\ref{sec: extremal structure}.

\begin{theorem}[H{\"a}ggkvist, Faudree and Schelp~\cite{haggkvist1981pancyclic}]\label{thm: Haggkvist}
    Let $G$ be a non-bipartite triangle-free graph on $n$ vertices.
    Then \[
    e(G) \le \frac{{(n-1)}^2}{4} + 1.
    \]
\end{theorem}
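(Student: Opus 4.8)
The plan is to prove the bound by analysing a shortest odd cycle of $G$. Since $G$ is non-bipartite it contains an odd cycle, and since $G$ is triangle-free the shortest odd cycle $C$ has length $2k+1$ with $k \ge 2$ (it cannot be a triangle). Write $U = V(G) \setminus V(C)$ and $m = |U| = n - (2k+1)$. I would partition the edges of $G$ into three groups — edges inside $C$, edges between $C$ and $U$, and edges inside $U$ — and bound each group separately, aiming for the clean combined estimate $e(G) \le (2k+1) + 2m + \tfrac{m^2}{4}$.

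First I would show that $C$ is chordless. A chord joining two vertices of $C$ at distance $d$ along the cycle splits $C$ into two cycles of lengths $d+1$ and $2k+2-d$; exactly one of these is odd, and a short parity/length check shows it is strictly shorter than $2k+1$ (or, when $d=2$, it is a triangle), contradicting the minimality of $C$ or triangle-freeness. Hence the number of edges inside $C$ is exactly $2k+1$.

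Next comes the key lemma: every vertex $u \in U$ has at most two neighbours on $C$. If $u$ is adjacent to $v_i$ and $v_j$, the two arcs of $C$ between them have lengths summing to $2k+1$, so one is odd; routing through $u$ along the odd arc produces an odd cycle of length (odd arc)$+2 \ge 2k+1$, forcing the odd arc to equal $2k-1$ and the complementary (even) arc to have length exactly $2$. Thus any two neighbours of $u$ on $C$ lie at distance exactly $2$, and since three vertices of a cycle of length $\ge 5$ cannot be pairwise at distance $2$, $u$ has at most two neighbours on $C$ (this is consistent with triangle-freeness because a distance-$2$ pair is non-adjacent, $C$ being chordless). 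Consequently the number of edges between $U$ and $C$ is at most $2m$. Since $G[U]$ is triangle-free, Mantel's theorem (Theorem~\ref{thm: mantel}) bounds the edges inside $U$ by $\lfloor m^2/4 \rfloor$, giving
\[
e(G) \le (2k+1) + 2m + \frac{m^2}{4}.
\]

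Finally I would optimise over $k$. Setting $T = \frac{(n-1)^2}{4}+1 = \frac{(m+2k)^2}{4}+1$ and subtracting, the right-hand side of the displayed bound minus $T$ simplifies to $(2-k)(k+m) = -(k-2)(k+m)$, which is $\le 0$ for every $k \ge 2$, with equality precisely when $k=2$. This yields $e(G) \le \frac{(n-1)^2}{4}+1$, completing the proof. The main obstacle is the shortest-odd-cycle lemma controlling how many neighbours an outside vertex may have on $C$; once that is established, the closing inequality is a one-line optimisation. As a bonus, the equality case forces the shortest odd cycle to be a $C_5$, forces $G[U]$ to be a balanced complete bipartite graph, and forces every outside vertex to attach to a distance-$2$ pair, which simultaneously identifies the extremal graphs.
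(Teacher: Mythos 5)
The paper contains no proof of this statement: Theorem~\ref{thm: Haggkvist} is quoted from H{\"a}ggkvist, Faudree and Schelp~\cite{haggkvist1981pancyclic} and used purely as a black box to derive Proposition~\ref{prop: almost bipartite}. So your proposal is not an alternative to a proof in the paper but a self-contained replacement for a citation, and after checking it I believe it is correct. The chordlessness of a shortest odd cycle $C$ of length $2k+1$ is standard; your key lemma is sound: if $u \notin V(C)$ has two neighbours on $C$, the odd arc between them together with the two edges at $u$ is an odd cycle of length $\ell + 2$, and minimality of $C$ forces $\ell = 2k-1$, so the neighbours lie at distance exactly $2$ on $C$. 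The closing optimisation also checks out: writing $n - 1 = m + 2k$,
\[
\left( (2k+1) + 2m + \frac{m^2}{4} \right) - \left( \frac{(m+2k)^2}{4} + 1 \right) = 2k + 2m - km - k^2 = (2-k)(k+m) \le 0 \quad \text{for } k \ge 2.
\]
Two points to tighten. First, the justification ``three vertices of a cycle of length $\ge 5$ cannot be pairwise at distance $2$'' is false as stated (take $v_1, v_3, v_5$ in $C_6$); it is true for \emph{odd} cycles, which is all you need: three pairwise distance-$2$ vertices would split $C$ into three arcs, each of length $2$ or $2k-1$, and no such combination sums to $2k+1$ when $k \ge 2$. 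Second, your equality-case ``bonus'' at the end is plausible but not actually argued (nothing in your counting immediately pins down the attachment structure between $U$ and $C$); since the statement only asserts the inequality, this costs nothing, but it should not be presented as established. Compared with the paper's citation, your route costs about a page and buys self-containedness, and it also exhibits the sharpness example: $K_{\lceil (n-1)/2 \rceil, \lfloor (n-1)/2 \rfloor}$ with one edge $ab$ deleted and a new vertex joined to $a$ and $b$, which attains $\frac{(n-1)^2}{4} + 1$ edges for odd $n$.
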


As a corollary, we have the following proposition.
\begin{proposition}\label{prop: almost bipartite}
    Let $G$ be a triangle-free graph on $n$ vertices~($n \ge 11$).
    When $n \equiv 2,4 \pmod 6$, if $e(G) \ge \frac{n^2}{4} - 1$, then either $G$ is the Tur\'an graph $T(n,2)$, or the graph obtained by deleting one edge from $T(n,2)$, or the graph $K_{n/2-1,n/2+1}$.
    When $n \equiv 3 \pmod 6$, if $e(G) \ge t(n,2) - 2$, then either $G$ is the Tur\'an graph $T(n,2)$, or the graph obtained by deleting one edge from $T(n,2)$, or the graph $K_{\lfloor n/2 \rfloor - 1, \lceil n/2 \rceil + 1}$.
\end{proposition}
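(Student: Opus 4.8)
The plan is to argue by a dichotomy on whether $G$ is bipartite. The hypotheses pin $e(G)$ to within an additive constant of $t(n,2)$, and the guiding principle is that a non-bipartite triangle-free graph is simply too sparse to reach this many edges; once $G$ is shown to be bipartite, counting the sizes of its two colour classes forces a near-balanced bipartition, after which a direct edge count isolates the few admissible graphs.

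\emph{Ruling out the non-bipartite case.} If $G$ is triangle-free and non-bipartite, Theorem~\ref{thm: Haggkvist} gives $e(G) \le \frac{(n-1)^2}{4} + 1$. For $n \equiv 2,4 \pmod 6$ the hypothesis reads $e(G) \ge \frac{n^2}{4} - 1$, and
\[
    \frac{(n-1)^2}{4} + 1 < \frac{n^2}{4} - 1 \iff n \ge 5 .
\]
For $n \equiv 3 \pmod 6$ the hypothesis reads $e(G) \ge t(n,2) - 2 = \frac{n^2-9}{4}$, and $\frac{(n-1)^2}{4} + 1 < \frac{n^2-9}{4}$ holds once $n \ge 8$. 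As $n \ge 11$ throughout, both inequalities apply and $G$ cannot be non-bipartite; hence $G$ is bipartite.

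\emph{The bipartite case.} I would first note that $G$ is connected, so its bipartition $(X,Y)$ is unique: a disjoint splitting into parts of sizes $n_1,n_2 \ge 1$ yields $e(G) \le \frac{n_1^2+n_2^2}{4} = \frac{n^2 - 2n_1 n_2}{4} \le \frac{n^2 - 2(n-1)}{4}$, which lies strictly below both thresholds for $n \ge 11$. Now $e(G) \le |X|\,|Y|$. In the even case, writing $|X| = \frac{n}{2} - t$ gives $|X|\,|Y| = \frac{n^2}{4} - t^2$, so $e(G) \ge \frac{n^2}{4} - 1$ forces $t^2 \le 1$ and the bipartition is $(\frac{n}{2},\frac{n}{2})$ or $(\frac{n}{2}-1,\frac{n}{2}+1)$; in the odd case, writing the imbalance as $s$ gives $|X|\,|Y| = t(n,2) - s(s+1)$, and $e(G) \ge t(n,2)-2$ forces $s \le 1$, so the bipartition is balanced or equal to $(\lfloor n/2 \rfloor - 1, \lceil n/2 \rceil + 1)$. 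If the bipartition is the unbalanced one, the corresponding complete bipartite graph already has exactly the threshold number of edges, so $G$ must equal it, giving $K_{n/2-1,\,n/2+1}$, respectively $K_{\lfloor n/2 \rfloor - 1,\, \lceil n/2 \rceil + 1}$. If the bipartition is balanced, then $G \subseteq T(n,2)$ and the edge count leaves $G$ as $T(n,2)$ with a bounded number of edges deleted.

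\emph{Main obstacle.} The crux is the final enumeration in the balanced subcase. In the even case the slack is only one edge, so $G$ is $T(n,2)$ or $T(n,2)$ minus an edge, matching the statement exactly. In the odd case the slack is two edges, so the counting a priori also admits $T(n,2)$ with two edges removed; reconciling this with the listed graphs, and verifying that the unbalanced graph $K_{\lfloor n/2\rfloor - 1, \lceil n/2\rceil + 1}$ is genuinely distinct from the balanced near-complete graphs (it carries a vertex of strictly larger degree), is the delicate bookkeeping that the proof must carry out carefully.
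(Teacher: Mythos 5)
Your argument follows the same route as the paper's own proof: invoke Theorem~\ref{thm: Haggkvist} to force $G$ to be bipartite, then bound the part sizes via $e(G)\le |X|\,|Y|$ and enumerate the survivors. Your arithmetic is correct throughout, and your even-case ($n\equiv 2,4\pmod 6$) conclusion matches the statement exactly; the connectivity digression is harmless but unnecessary, since the size bound applies to any bipartition. The one point you defer as ``delicate bookkeeping'' is not something you failed to carry out -- it is a defect in the proposition as stated. For $n\equiv 3\pmod 6$ the graph $T(n,2)$ with \emph{two} edges deleted is triangle-free, has exactly $t(n,2)-2$ edges, and is neither $T(n,2)$, nor $T(n,2)$ minus one edge, nor $K_{\lfloor n/2\rfloor-1,\lceil n/2\rceil+1}$ (its part sizes differ from the latter), so it satisfies the hypothesis but is absent from the stated conclusion. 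The paper glosses over this by omitting the odd case (``we can prove it similarly''), yet when the proposition is applied in Case~4 of the main proof it is quoted with the corrected conclusion, namely that $G_b$ is a complete bipartite graph ``or can be obtained by deleting one or two edges from a complete bipartite graph.'' So you should not try to reconcile your count with the listed graphs; rather, the statement should read ``at most two edges'' in the $n\equiv 3$ case, and with that amendment your proof is complete.
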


\begin{proof}
    When $n \equiv 2,4 \pmod 6$ and $e(G) \ge \frac{n^2}{4} - 1$.
    By Theorem~\ref{thm: Haggkvist}, $G$ is a bipartite graph.
    Let $X$ and $Y$ be the two parts of $G$ with $|X|=x$ and $|Y|=y=n-x$.
    We may assume $x \le y$.
    If $x < n/2-2$, then $e(G) \le xy \le (n/2-2)(n/2+2) = n^2/4 - 4 < t(n,2) - 1$, a contradiction. 
    If $x = n/2 -1$, then $e(G) \le xy = (n/2-1)(n/2+1) = n^2/4 - 1$.
    The equality holds, so $G$ must be $K_{n/2-1,n/2+1}$.
    If $x = n/2$, then either $G$ is the Tur\'an graph $T(n,2)$ or the graph obtained by deleting one edge from $T(n,2)$.

    When $n \equiv 3 \pmod 6$, we can prove it similarly. 
    We omit the details here.
\end{proof}

\begin{proposition}\label{prop: almost bipartite with triangle}
    Let $G$ be a graph obtained by deleting at most two edges from a complete bipartite graph with parts $X$ and $Y$.
    $5 \le |X|, |Y|$.
    If $G$ is a subgraph of $H$ on the same vertex set and $H$ contains a triangle in one part, say $X$.
    Then the maximum number of triangles in $H$ is obtained by embedding a maximal vertex disjoint triangle in $X$ and if there are two remaining vertices, add an edge.
    That is, the maximum is achieved by one of the following:
    \begin{itemize}
        \item When $|X| \equiv 0 \pmod 3$, then $H_{n}^{|X|}$.
        \item When $|X| \equiv i \pmod 3$, then $F_{n}^{|X|,i}$ for $i=1,2$.
    \end{itemize}
\end{proposition}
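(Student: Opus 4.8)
The plan is to first pin down the structure that $P_6^2$-freeness forces on $H$ (which here I treat as $P_6^2$-free, the standing hypothesis under which the number of triangles is to be bounded), and then to solve the resulting small optimization. Write $E_X$ and $E_Y$ for the sets of edges of $H$ lying inside $X$ and inside $Y$, and recall that $P_6^2$ on vertices $v_1,\dots,v_6$ is a strip of four triangles $v_1v_2v_3,\ v_2v_3v_4,\ v_3v_4v_5,\ v_4v_5v_6$. Two $2$-colorings of its vertex set drive everything. First, the split $\{v_2,v_3,v_4\}/\{v_1,v_5,v_6\}$ exhibits $P_6^2$ as a triangle on $\{v_2,v_3,v_4\}$ together with a single edge $v_5v_6$ in the opposite class, the remaining vertex $v_1$ and all its incident edges running across. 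Second, the split $\{v_2,v_3,v_5,v_6\}/\{v_1,v_4\}$ exhibits $P_6^2$ with the four vertices $v_2,v_3,v_5,v_6$ inducing the path $v_2v_3v_5v_6$ and the two vertices $v_1,v_4$ forming an independent pair, again with every remaining edge going across.

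First I would show $Y$ is independent. If $H$ had an edge $y_1y_2$ in $Y$, then together with the given triangle $abc$ in $X$ I could build a $P_6^2$ via the first coloring: send the triangle $v_2v_3v_4$ to $a,b,c$, the edge $v_5v_6$ to $y_1y_2$, and take $v_1$ to be a further vertex of $Y$. All the remaining edges of $P_6^2$ run between $X$ and $Y$; since $G$ is missing at most two cross edges and $|X|,|Y|\ge 5$, there is enough freedom in the ordering of $a,b,c$, the orientation of $y_1y_2$, and the choice of $v_1$ to place every required cross edge on a present edge. This contradiction gives $E_Y=\emptyset$.

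With $Y$ independent, any hypothetical copy of $P_6^2$ in $H$ would meet $Y$ in an independent set, hence in at most two vertices, since the independence number of $P_6^2$ is $2$; so it would use at least four vertices of $X$. Running over the six non-adjacent pairs of $P_6^2$, one checks that in each case the complementary four vertices induce a subgraph containing a $P_4$ (indeed they induce a $P_4$, a paw, or a diamond). Consequently, if $E_X$ contained a path on four vertices, I could reconstruct $P_6^2$ via the second coloring, taking that $P_4$ as $v_2v_3v_5v_6$ and supplying $v_1,v_4$ from $Y$ through the near-complete bipartite edges, once more routing around the at most two missing edges. Hence $E_X$ is $P_4$-free, so each of its components is a star or a triangle.

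Finally I would solve the optimization. Since $Y$ is independent, the triangles of $H$ are exactly the triangle-components of $E_X$, say $t$ of them, together with the triangles formed by an edge $uv\in E_X$ and a common neighbour in $Y$; the latter number is $\sum_{uv\in E_X}|N(u)\cap N(v)\cap Y|\le |E_X|\cdot|Y|$, with equality once no cross edge is missing. As $H\supseteq G$ may reinstate every cross edge, the count is maximized when $H[X,Y]=K_{|X|,|Y|}$, leaving me to maximize $t+|E_X|\cdot|Y|$ over $P_4$-free graphs on $X$. A $P_4$-free graph on $|X|$ vertices has at most $\lfloor|X|/3\rfloor$ vertex-disjoint triangles and at most $|X|$ edges (at most $|X|-1$ when $3\nmid|X|$), and taking $\lfloor|X|/3\rfloor$ vertex-disjoint triangles, adding an edge on the two leftover vertices when $|X|\equiv 2\pmod 3$, attains both maxima simultaneously and therefore maximizes the whole expression; this yields $H_n^{|X|}$ when $3\mid|X|$ and $F_n^{|X|,i}$ when $|X|\equiv i\pmod 3$ with $i\in\{1,2\}$, as claimed. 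The main obstacle is the routing in the two embedding steps: I must ensure a copy of $P_6^2$ survives the deletion of up to two cross edges of $G$, which is exactly where $|X|,|Y|\ge 5$ is spent and which requires a short case check on how those two missing edges are distributed among the vertices used.
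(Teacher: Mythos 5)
Your proposal is correct and follows essentially the same route as the paper's proof: both establish that $Y$ must be independent and $H[X]$ must be $P_4$-free by embedding $P_6^2$ via the same two patterns (triangle in $X$ plus an edge in $Y$, and a $P_4$ in $X$ plus two vertices of $Y$), using $|X|,|Y|\ge 5$ to route around the at most two missing cross edges, and then optimize over disjoint unions of triangles and stars in $X$. Your write-up is if anything slightly more explicit than the paper's in finishing the final counting step, which the paper leaves as ``a simple calculation.''
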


    \begin{figure}
            \centering
            \includegraphics[width=0.8\linewidth]{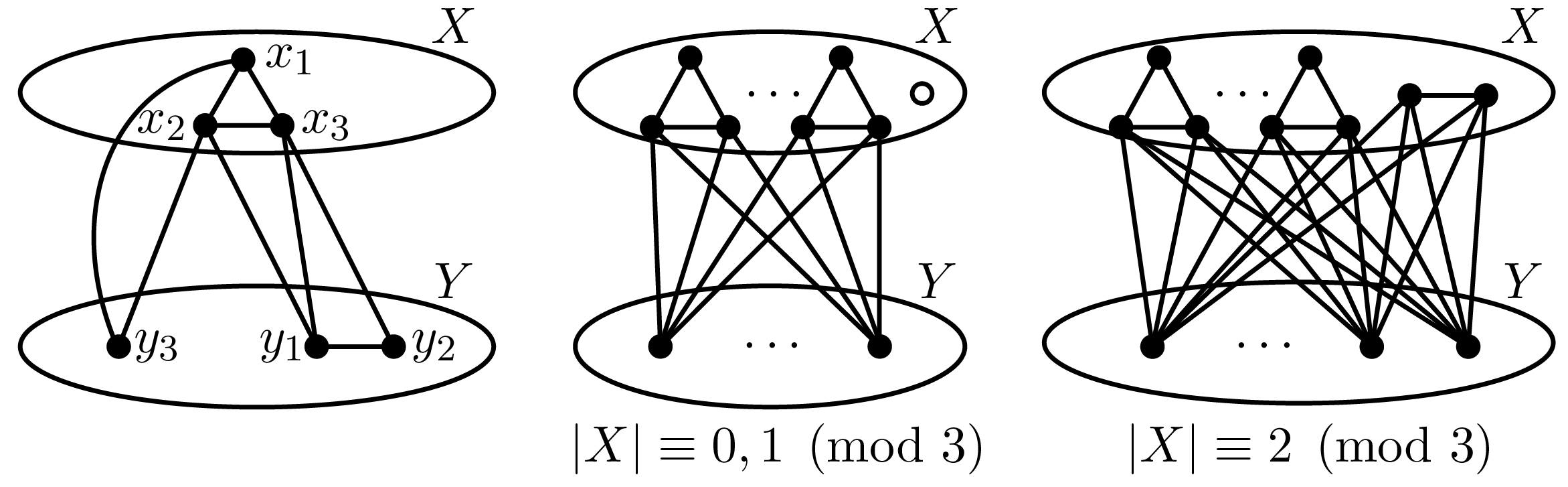}
            \caption{The configurations and extremal structures in Proposition~\ref{prop: almost bipartite with triangle}.}\label{fig: prop}
        \end{figure}

\begin{proof}
    First we claim that $H[X]$ and $H[Y]$ are $P_4$-free.
    Suppose otherwise, let $x_1x_2x_3x_4$ be a $P_4$ in $H[X]$.
    Then there exist two vertices $y_1,y_2 \in Y$ such that $x_{i}y_{j} \in E(H), i=1,2,3,4, j=1,2$ since $|Y| \ge 5$ and there are at most two edges missing between $X$ and $Y$.
    Then $x_1 x_2 y_1 x_3 x_4 y_2$ forms a $P_6^2$ in $H$, a contradiction.

    If $H[X]$ contains a triangle, then there is no edge in $H[Y]$.
    If $H[X]$ contains a triangle $x_1x_2x_3$, we claim that $H[Y]$ is an empty set.
    Suppose otherwise, let $y_1y_2$ be an edge in $H[Y]$~(see Figure~\ref{fig: prop}).
    Since there are at most two edges missing between $X$ and $Y$, there exists a vertex $x \in \{x_1,x_2,x_3\}$ such that $xy_1, xy_2 \in E(H)$. 
    We may assume $x = x_3$.
    Also, we may assume $x_2y_1 \in E(H)$ since $x_1,x_2$ are equivalent and $y_1,y_2$ are equivalent.
    Since $|Y| \ge 4$, there exists a vertex $y_3 \in Y \setminus \{y_1,y_2\}$ such that $x_1y_3,x_2y_3 \in E(H)$.

    Now we know that $Y$ is an independent set in $H$ and $H[X]$ is $P_4$-free and contains a triangle.
    Then $H[X]$ consists of vertex disjoint triangles and vertex disjoint stars.
    A simple calculation shows that the maximum number of triangles in $H$ is obtained by embedding a maximal vertex disjoint triangle in $X$ and if there are two remaining vertices, add an edge.
\end{proof}

Sometimes, the generalized Tur\'an number for triangles is strongly related to the hypergraph Tur\'an number if we take every triangle as a hyperedge.
If we take every triangle in $P_6^2$ as a hyperedge, then we get the $3$-uniform tight path with $4$ hyperedges.
Such problem has been studied by F\"{u}redi, Jiang, Kostochka, Mubayi and Verstra\"{e}te~\cite{Furedi2019}.
Our following discharging method has a similar flavor as theirs.

\section{Proof of Theorem~\ref{thm: main P62}}\label{sec: main proof}

The whole section is devoted to the proof of Theorem~\ref{thm: main P62}.
In this section, we use $t(G)$ to denote the number of triangles in a graph $G$.

\subsection{Relationship between $t(G)$ and $e(G)$}
\begin{lemma}\label{lem: discharging}
    Let $G$ be a $P_6^2$-free graph on $n$ vertices.
    Then $t(G) \le e(G)$.
\end{lemma}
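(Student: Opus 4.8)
The plan is to prove the inequality block by block. Since two triangles that share an edge lie in the same block, every edge contained in at least one triangle belongs to a \emph{unique} block, while edges lying in no triangle belong to no block at all. Consequently the edge sets of distinct blocks are pairwise disjoint, so $e(G) \ge \sum_{B} e(B)$, whereas $t(G) = \sum_{B} t(B)$ because the blocks partition the triangles of $G$. It therefore suffices to prove $t(B) \le e(B)$ for every block $B$, and I would run the entire discharging argument inside a single block.

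To bound $t(B)$ against $e(B)$ I would give each triangle of $B$ one unit of charge and try to send that unit to a private edge; formally, I seek an injection $\phi$ from the triangles of $B$ to the edges of $B$ with $\phi(T) \in E(T)$. By Hall's theorem applied to the bipartite incidence graph between the triangles and their edges, such a system of distinct representatives exists precisely when every family $\mathcal{S}$ of triangles satisfies $\bigl|\bigcup_{T \in \mathcal{S}} E(T)\bigr| \ge |\mathcal{S}|$, and producing $\phi$ gives $t(B) \le e(B)$ at once. In the discharging language this is the same statement: the unit charges can be spread fractionally over the edges so that no edge accumulates more than $1$ exactly under this Hall condition.

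The substance of the argument is to verify Hall's condition from $P_6^2$-freeness, and this is where I expect the main difficulty. I would take a minimal violating family $\mathcal{S}$, so that $|E(\mathcal{S})| < |\mathcal{S}|$ while every proper subfamily spans enough edges; this edge deficiency forces heavy overlap among the triangles of $\mathcal{S}$. Since the triangles of a block are linked through shared edges, I would arrange $\mathcal{S}$ as a chain $T_1, T_2, \ldots$ in which consecutive triangles share an edge, and then argue that enough overlap lets four of them be realized as a tight strip on six distinct vertices, i.e.\ a copy of $P_6^2$, contradicting the hypothesis. The delicate point is controlling the ways in which consecutive triangles can meet: repeatedly sharing the \emph{same} edge (as in a book) does not build a strip, whereas sharing successive \emph{distinct} edges does, so the analysis must separate these behaviours carefully.

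Finally I would record the base blocks explicitly, to make the bound tight and transparent: a single triangle gives $1 \le 3$; the book $B_k$ on a common edge gives $k \le k+2$; $K_4$ gives $4 \le 6$; and $K_5$, which is $P_6^2$-free simply because it has only five vertices, gives the extremal equality $t = e = 10$. These are exactly the blocks on which the charging is tightest, so the classification of the previous paragraph must show that every $P_6^2$-free block is one of these, or is assembled from them without creating an edge-deficient family, which closes the argument that $t(G) \le e(G)$.
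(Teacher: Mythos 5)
Your reduction to blocks is sound (triangles and triangle-covered edges are partitioned by blocks, so it suffices to show $t(B)\le e(B)$ for each block), and your reformulation via Hall's theorem is legitimate: an injection from triangles to edges certainly gives $t\le e$, and since every subgraph of a $P_6^2$-free graph is again $P_6^2$-free, verifying Hall's condition is logically equivalent to proving the lemma for all such graphs. In fact the paper's discharging produces exactly a fractional version of your matching (each edge sends out total charge at most $1$, each triangle receives at least $1$), so by integrality of bipartite matchings the two formulations prove the same thing. The base examples you list are also correct.

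The gap is that the entire content of the lemma lies in the step you only sketch: showing that a (minimal) Hall-violating family of triangles forces a copy of $P_6^2$. Your plan to ``arrange $\mathcal{S}$ as a chain and realize four of them as a tight strip on six distinct vertices'' is not a proof, and the difficulty you flag in passing --- distinguishing overlap through a single shared edge (a book, which is harmless) from overlap through successive distinct edges --- is precisely where all the work is. The paper spends its whole proof on this: it classifies edges by the number of triangles containing them, proves a ``diamond'' claim (if $xyz$ and $xyw$ are triangles and $yz$ lies in at least four triangles, then $xw$ lies in at most two), and then runs a three-case analysis on edges lying in four or more triangles, each case requiring an explicit construction of a $P_6^2$ or a structural conclusion such as ``$xyzuw$ is a $K_5$.'' None of this is recoverable from your outline; in particular, a minimal violating family need not contain four triangles forming a strip on six distinct vertices --- heavy overlap can concentrate on few vertices (e.g.\ inside a $K_5$, which is $P_6^2$-free yet has ten pairwise overlapping triangles) --- so one must argue about how many triangles can share an edge and how their apexes interact, not merely about chains. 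Your closing claim that every $P_6^2$-free block is a triangle, a book, a $K_4$, or a $K_5$, ``or is assembled from them without creating an edge-deficient family,'' is likewise unsubstantiated; the paper's own later classification shows the blocks are richer than this list ($K_5^-$-blocks, $K_4$-blocks, $TP_2$-blocks and suspension-blocks), so you cannot appeal to such a classification without proving it. As it stands the proposal is a correct reduction plus an unproven core.
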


\begin{proof}
    We prove the lemma by discharging method.
First we say an edge is of type $i$~($i=1,2,3$) if there are exactly $i$ triangles containing it. We say an edge is of type $4$ if it is contained in at least $4$ triangles.
The type of a triangle $xyz$ is defined as the triple $(i,j,k)$ where the types of $xy$, $yz$ and $xz$ are $i$, $j$ and $k$.
Then we have the following claim.
\begin{claim}\label{claim: diamond}
        If there are two triangles $xyz$ and $xyw$, where $yz$ is of type $4$, then $xw$ is of type $1$ or $2$.
    \end{claim}

    \noindent
    \textbf{Proof of Claim~\ref{claim: diamond}:}
    Suppose to the contrary that $xw$ is of type $3$ or $4$.
    Then there exists a triangle $xwu$ where $u \notin \{x,y,z,w\}$ since $xw$ is of type $3$ or $4$.
    Moreover, there exists a triangle $yzv$ where $v \notin \{x,y,z,w,u\}$ since $yz$ is of type $4$.
    Finally, $vzyxwu$ forms a $P_6^2$~(see Figure~\ref{fig: diamond}), a contradiction.
    \hfill $\blacksquare$ \par 

    Now we do the discharging process.
    Initially, we assign each edge of $G$ a charge of $1$.
    Then the total charge is $e(G)$.
    Next, each edge distributes its charge to the triangles containing it by the following rules:
    \begin{enumerate}
        \item If $xy$ is of type $i$~($i=1,2,3$), then it gives a charge of $\frac{1}{i}$ to each triangle containing it.
        \item If $xy$ is of type $4$, then consider the triangle $xyz$ containing $xy$.
        \begin{enumerate}[label= (\alph*)]
            \item If $xyz$ is one of the types $\{(1,1,4), (1,2,4),(1,3,4),(1,4,4), (2,2,4)\}$, then $xy$ gives no charge to $xyz$.
            \item If $xyz$ is of type $(2,3,4)$, then $xy$ gives a charge of $\frac{1}{6}$ to $xyz$.
            \item If $xyz$ is of type $(3,3,4)$, then $xy$ gives a charge of $\frac{1}{3}$ to $xyz$.
            \item If $xyz$ is of type $(2,4,4)$, then $xy$ gives a charge of $\frac{1}{4}$ to $xyz$.
            \item If $xyz$ is of type $(3,4,4)$, then $xy$ gives a charge of $\frac{1}{3}$ to $xyz$.
            \item If $xyz$ is of type $(4,4,4)$, then $xy$ gives a charge of $\frac{1}{3}$ to $xyz$.
        \end{enumerate}
    \end{enumerate}

    Clearly, after the discharging process, each triangle receives a charge of at least $1$.
    It remains to prove that after the discharging process, no edge gives out more than a charge of $1$.
    If $xy$ is of type $i$~($i=1,2,3$), then it gives out a total charge of $1$.
    If $xy$ is of type $4$, then consider the following cases.

    \begin{figure}[t]
        \centering
        \begin{minipage}{0.48\linewidth}
            \centering
            \includegraphics[width=0.6\linewidth]{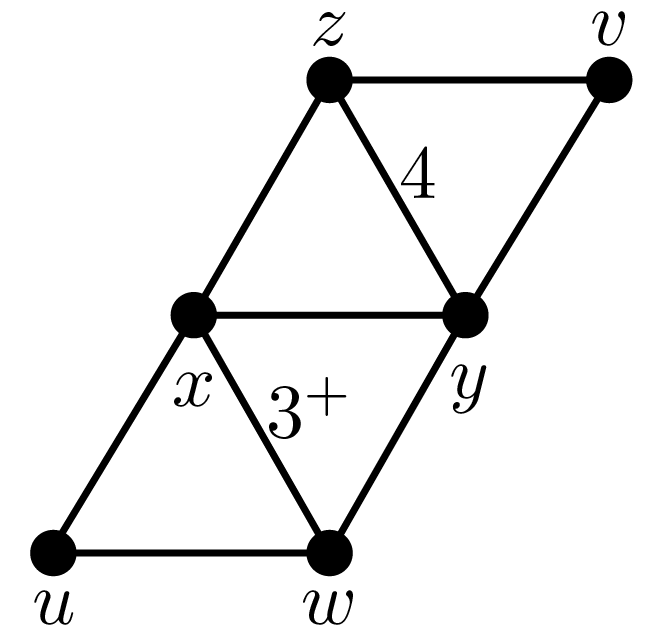}
                \caption{The configuration in the proof of Claim~\ref{claim: diamond}. $3^+$ means the type of $xw$ is at least $3$.}\label{fig: diamond}
        \end{minipage}\hfill
        \begin{minipage}{0.48\linewidth}
            \centering
            \includegraphics[width=0.6\linewidth]{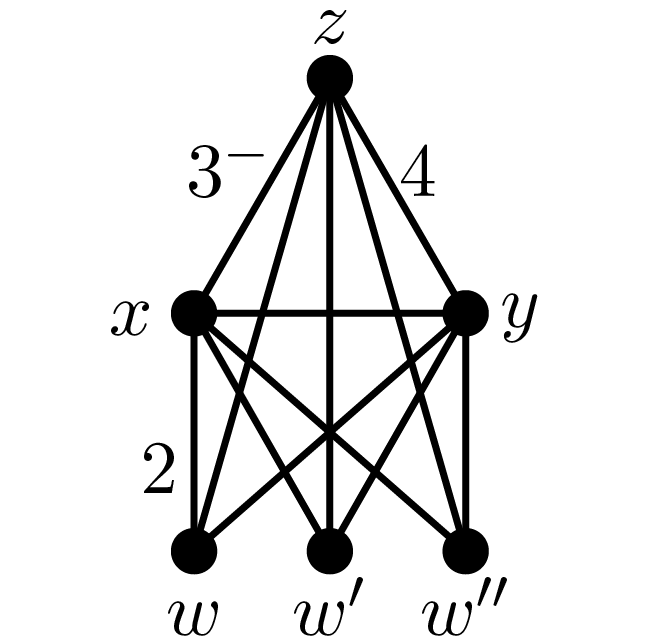}
            \caption{Case 1 in the proof of Lemma~\ref{lem: discharging}. $3^-$ means the type of $xz$ is at most $3$.}\label{fig: case 1 discharging}
        \end{minipage}
    \end{figure}

    \begin{figure}[t]
        \centering
        \begin{minipage}{0.48\linewidth}
            \centering
            \includegraphics[width=\linewidth]{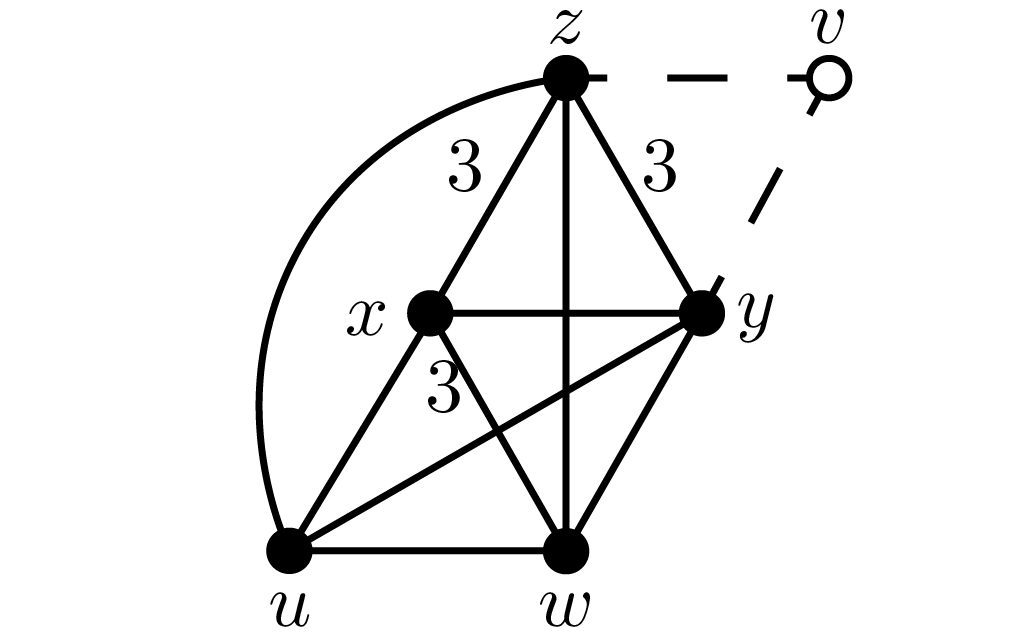}
            \caption{Case 2 in the proof of Lemma~\ref{lem: discharging}.}\label{fig: case 2 discharging}
        \end{minipage}\hfill
        \begin{minipage}{0.48\linewidth}
            \centering
            \includegraphics[width=\linewidth]{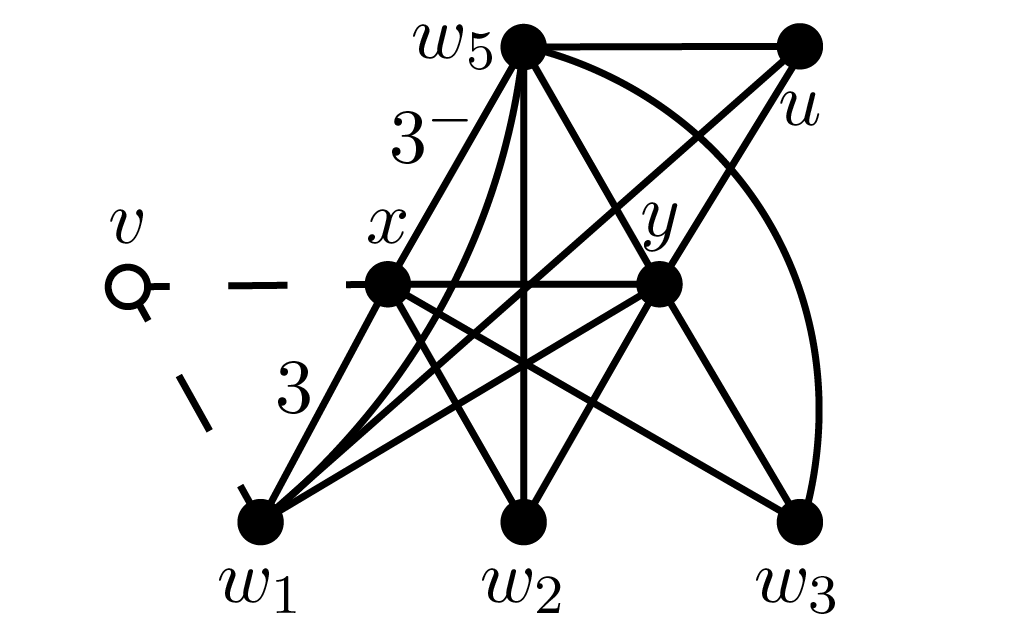}
            \caption{Case 3 in the proof of Lemma~\ref{lem: discharging}. $3^-$ means the type of $xw_5$ is at most $3$.}\label{fig: case 3 discharging}
        \end{minipage}
    \end{figure}

    \textbf{Case 1:} there exists a triangle $xyz$ of type $(i,4,4)$~($i\in \{2,3,4\}$).
    We may assume $yz$ is of type $4$~(see Figure~\ref{fig: case 1 discharging}).
    Then for another triangle $xyw$ which received charge from $xy$, by Claim~\ref{claim: diamond}, $xw$ must be of type $2$~(if $xw$ is of type $1$, then $xyw$ does not receive charge from $xy$).
    
    Moreover, the two triangles containing $xw$ must be $xwy$ and $xwz$~(otherwise, we have a $P_6^2$ as in the proof of Claim~\ref{claim: diamond}).
    Now $xy$ gives at most $\frac{1}{3}$ charge to $xyz$ and at most $\frac{1}{4}$ charge to $xyw$.
    If $xy$ gives more than one charge, then there must exist $w'$ and $w''$ such that $xyw'$ and $xyw''$ are triangles receiving charge from $xy$.
    By a similar argument, $xw'$ and $xw''$ are also of type $2$ and then $zw'$ and $zw''$ are edges.
    Then $xz$ is a type $4$ edge since there are already four triangles containing it: $xyz, xzw, xzw', xzw''$.
    Again by Claim~\ref{claim: diamond}, $yw$ is of type $2$. 
    Then $xyw$ does not receive charge from $xy$, a contradiction.
    In the following, we may assume there is no triangle containing $xy$ of type $(i,4,4)$~($i\in \{2,3,4\}$).

    \textbf{Case 2:} there does not exist a triangle containing $xy$ of type $(i,4,4)$~($i \in \{2,3,4\}$), but exists a triangle $xyz$ of type $(3,3,4)$.
    Assume there is another triangle $xyw$ which received charge from $xy$~(see Figure~\ref{fig: case 2 discharging}). 
    Then $xyw$ is of type $(3,3,4)$ or $(2,3,4)$.
    We may assume $xw$ is of type $3$. Then there exists a vertex $u$ such that $xwu$ is a triangle where $u \notin \{x,y,z,w\}$ since $xw$ is of type $3$.
    Then the three triangles containing $yz$ must be $xyz, yzw, yzu$~(otherwise, we can find a $P_6^2$).
    That is, $xyzuw$ is a $K_5$.
    Then there is no other triangle containing $xy$ that received charge from $xy$, otherwise we can find a $P_6^2$, a contradiction.
    In the following, we may assume there is no triangle containing $xy$ of type $(3,3,4)$.
    
    \textbf{Case 3:} there does not exist a triangle containing $xy$ of type $(3,3,4)$ or $(i,4,4)$~($i \in \{2,3,4\}$).
    Then the triangle received charge from $xy$ can only be of type $(2,3,4)$ and each received $\frac{1}{6}$.
    If $xy$ gives more than one charge, then there exists $w_1,w_2,\ldots,w_7$ such that $xyw_i$ are triangles for each $i$ of type $(2,3,4)$.
    We may assume $xw_1,xw_2,xw_3,xw_4$ are of type $3$~(see Figure~\ref{fig: case 3 discharging}).
    Now we consider $yw_5$.
    Note that $yw_5$ is either of type $2$ or type $3$. There exists a vertex $u \notin \{x,y,w_5\}$ such that $yw_5u$ is a triangle.
    Since $w_1,w_2,w_3,w_4$ are equivalent, we may assume $u \notin \{w_1,w_2,w_3\}$.
    Now we consider $xw_1$, the three triangles containing $xw_1$ must be $xyw_1, xw_1w_5, xw_1u$~(otherwise, we can find a $P_6^2$).
    Then $yw_1w_5$ is a triangle.
    Similarly, we can prove that $yw_2w_5$ and $yw_3w_5$ are triangles.
    Together with the triangle $xyw_5$ containing $yw_5$, we have that $yw_5$ is of type $4$, a contradiction with $xyw_5$ being of type $(2,3,4)$.

    As a conclusion, every edge gives out at most a charge of $1$.
    Thus we have $t(G) \le e(G)$.
\end{proof}
\vspace*{.2cm}

\subsection{Classify the blocks}

Let $G$ be a $P_6^2$-free graph on $n$ vertices with maximum number of edges.
We may assume every edge in $G$ is contained in at least one triangle, otherwise we can delete it without decreasing the number of triangles.
Then every edge is contained in a unique block.
In this subsection, we classify the blocks of a $G$ into four types.
By the lower bound construction in Theorem~\ref{thm: main P62}, we have $t(G) \ge \left\lfloor\frac{n^2}{4}\right\rfloor + g(n)$. 
We first prove the following claim.

\begin{claim}\label{claim: K5-free}
    $G$ is $K_5$-free.
\end{claim}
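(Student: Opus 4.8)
The plan is to suppose $G$ contains a copy of $K_5$ on a vertex set $A=\{a_1,\dots,a_5\}$ and derive a contradiction with the triangle-maximality of $G$. The first point is that a $K_5$ by itself is harmless: since any two adjacent vertices of $P_6^2$ have labels differing by at most two, $P_6^2$ has no four pairwise-adjacent vertices, so $P_6^2$ is $K_4$-free and a $K_5$ contains no $P_6^2$. Hence any forbidden configuration must involve a vertex outside $A$. The key local observation I would establish first is that every vertex $u\notin A$ has at most one neighbour in $A$: if $u$ were adjacent to, say, $a_1$ and $a_2$, then the ordering $u,a_1,a_2,a_3,a_4,a_5$ realises a $P_6^2$ (the distance-one pairs $ua_1,a_1a_2,\dots,a_4a_5$ and the distance-two pairs $ua_2,a_1a_3,a_2a_4,a_3a_5$ are all edges), contradicting $P_6^2$-freeness. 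Consequently the outside-neighbourhoods $N_i:=N(a_i)\setminus A$ are pairwise disjoint, and each triangle of $G$ either lies inside $A$ (there are exactly $10$), or is an \emph{apex} triangle $a_iuw$ with $u,w\in N_i$, or lies entirely in $B:=V(G)\setminus A$.

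Next I would exploit that $a_i$ is adjacent to all of $N_i$ while the induced subgraph $\{a_i\}\cup N_i$ is $P_6^2$-free. Placing $a_i$ at an endpoint of a putative $P_6^2$ shows $G[N_i]$ is $P_5^2$-free; placing $a_i$ at an interior degree-four vertex of $P_6^2$ shows $G[N_i]$ contains no copy of $P_6^2$ minus that vertex, i.e. no triangle carrying a pendant path of length two. These restrictions force each $G[N_i]$ to be extremely triangle-sparse near its edges, so that both the number of apex triangles $\sum_i e(G[N_i])$ and the number of triangles of $G[B]$ supported inside the $N_i$ are tightly controlled; for the internal triangles I can invoke Mukherjee's bound $\ex(m,K_3,P_5^2)=\lfloor m^2/8\rfloor$ from Theorem~\ref{thm: triangle in P52 Sp4}, applied to each $P_5^2$-free piece $G[N_i]$.

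Finally I would count. Writing $t(G)=10+\sum_i e(G[N_i])+t(G[B])$ and combining the structural bounds above with Lemma~\ref{lem: discharging} applied to the $P_6^2$-free graph $G[B]$, I aim to show that the presence of the $K_5$ caps $t(G)$ strictly below the lower bound $\lfloor n^2/4\rfloor+g(n)$ guaranteed by the extremal construction in Theorem~\ref{thm: main P62}, which is the desired contradiction. Heuristically this should hold because the disjointness $\sum_i|N_i|\le n-5$ forces the five clique vertices to share their neighbourhoods, so the ten rigid clique-triangles replace the far more efficient book triangles that the same vertices would generate in $H_n^{\lfloor n/2\rfloor}$. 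I expect the main obstacle to be the combined estimate on $\sum_i e(G[N_i])+t(G[B])$: bounding these two quantities separately is too lossy, since each can a priori be of order $n^2/4$, so the argument must capture that an edge of $B$ serving as the base of an apex triangle imposes, via the forbidden triangle-with-pendant-path, a local sparsity that simultaneously suppresses the triangles of $G[B]$ in that region. Making this trade-off quantitative is the crux; should the direct count prove unwieldy, I would instead delete the ten clique edges and redistribute the five vertices into a book attached to $B$, producing a $P_6^2$-free graph with strictly more triangles and contradicting maximality.
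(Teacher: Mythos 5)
Your opening move is exactly right and matches the paper's: if some $u\notin A$ had two neighbours $a_1,a_2$ in the clique, then $u\,a_1\,a_2\,a_3\,a_4\,a_5$ would be a $P_6^2$, so every outside vertex sends at most one edge into $A$. The gap is everything after that. Your plan is to bound $t(G)=10+\sum_i e(G[N_i])+t(G[B])$ by controlling the two remaining terms via $P_5^2$-freeness of the $G[N_i]$, Mukherjee's theorem, and the discharging lemma applied to $G[B]$ --- and you yourself flag that you cannot make the trade-off between $\sum_i e(G[N_i])$ and $t(G[B])$ quantitative. That flag is accurate: with the tools you name, the best available combination is $\sum_i e(G[N_i])\le e(G[B])$ (the $N_i$ are pairwise disjoint subsets of $B$, so these edge sets are disjoint subsets of $E(G[B])$) together with $t(G[B])\le e(G[B])$, which only yields $t(G)\le 10+2e(G[B])$, a bound of order $n^2/2$ that proves nothing. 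The fallback of deleting the ten clique edges and ``redistributing the five vertices into a book attached to $B$'' is not an argument either: verifying that the surgically modified graph is still $P_6^2$-free, and that it gains triangles, requires structural knowledge of $B$ that you do not have at this point. So as written the proof does not close.

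The missing idea is to stop counting triangles piece by piece and instead count edges globally. Lemma~\ref{lem: discharging} gives $t(G)\le e(G)$ for the whole graph, and your own observation gives $e(G)\le e(G-A)+\binom{5}{2}+(n-5)$, since at most $n-5$ edges cross from $B$ to $A$. Now $e(G-A)\le \ex(n-5,P_6^2)=\lfloor (n-5)^2/4\rfloor+f(n-5)$ by Theorem~\ref{thm: katona P62}, so $t(G)\le \lfloor (n-5)^2/4\rfloor+f(n-5)+n+5$. The difference $\lfloor n^2/4\rfloor-\lfloor (n-5)^2/4\rfloor$ is roughly $5n/2$, which comfortably exceeds $f(n-5)+n+5-g(n)\approx 4n/3+{}$const once $n\ge 11$, contradicting the lower bound $t(G)\ge\lfloor n^2/4\rfloor+g(n)$ coming from the extremal construction. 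This finishes the claim in a few lines; none of the finer structure you develop ($P_5^2$-freeness of the $N_i$, the forbidden triangle with a pendant path, Mukherjee's bound) is needed.
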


\begin{proof}
    If there exists a $K_5$ in $G$ with vertices $u_1u_2u_3u_4u_5$, let $U = \{u_1,u_2,u_3,u_4,u_5\}$.
    Then for every vertex in $V(G) \setminus U$, it can be adjacent to at most one vertex in $U$, otherwise we can find a $P_6^2$.

    By Theorem~\ref{thm: katona P62}, we have $e(G-U) \le \ex(n-5, P_6^2) \le \left\lfloor \frac{{(n-5)}^2}{4} \right \rfloor + f(n-5)$.
    Combining that $t(G) \le e(G)$ from Lemma~\ref{lem: discharging}, we have the following inequalities.
    
    \begin{equation*}
    \begin{aligned}
        \left\lfloor\frac{n^2}{4}\right\rfloor + g(n) & \le t(G) \le e(G) \\
            &\le e(G-U) + \binom{5}{2} + (n-5) \\
             &\le \left\lfloor \frac{{(n-5)}^2}{4} \right \rfloor + f(n-5) + n+5,
    \end{aligned}
    \end{equation*}
    a contradiction when $n \ge 11$.
\end{proof}
\vspace*{.2cm}

Let $B$ be a block of $G$.
Suppose we can fix an edge set $E$ in $B$, then we can run the following \emph{growing process} to generate the block $B$:
\begin{itemize}
    \item Start with $B_0 = E$.
    \item While there exists a triangle not contained in $B_i$ but containing an edge $uv$ in $B_i$, then add the triangle to $B_i$ to form $B_{i+1}$. In this case, we say we \emph{grow} a triangle from edge $uv$.
    \item Repeat until no more triangle can be added, then we obtain $B$.
\end{itemize}
If we start the growing process with a non-empty edge set $E$ in $B$, then we can always end up with $B$.
Then we classify the blocks of $G$ with the following claims.

\begin{figure}
        \centering
        \includegraphics[width=0.8\linewidth]{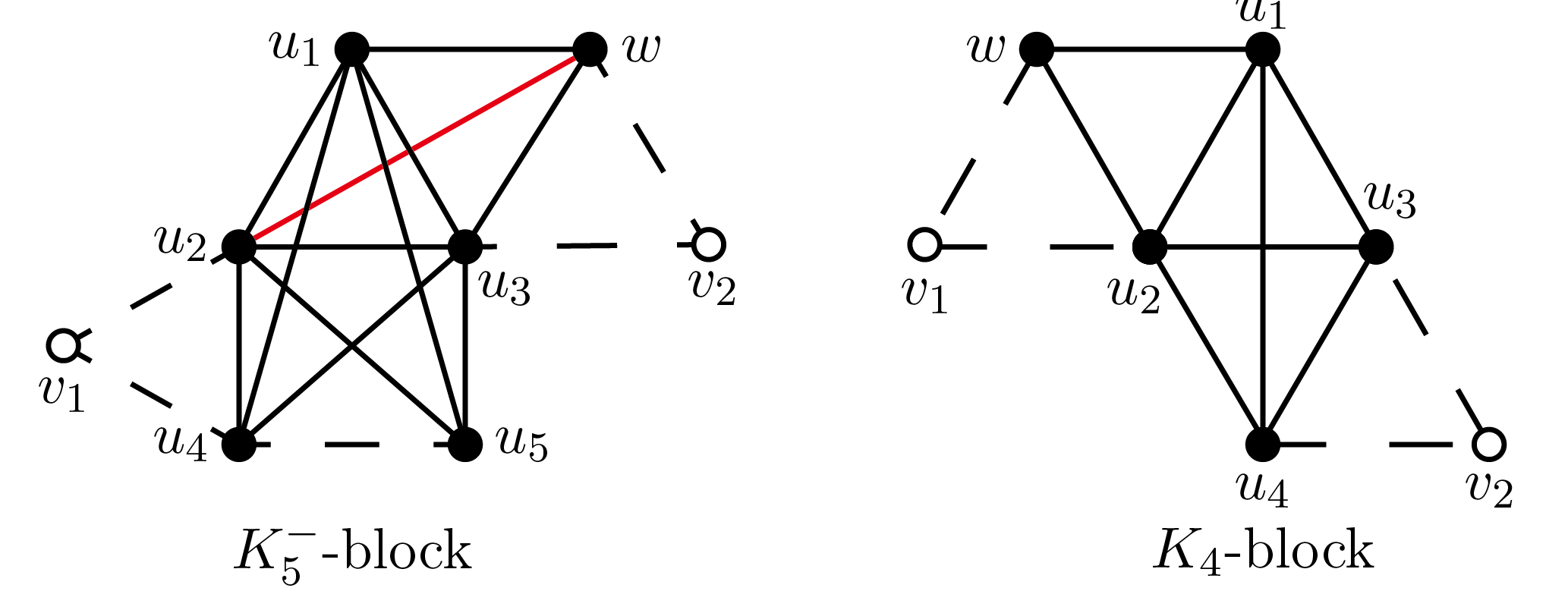}
        \caption{$K_5^-$-block and $K_4$-block.}\label{fig: K5- K4 block}
    \end{figure}

\begin{claim}\label{claim: block K^5^-}
    If a block contains a $K_5^-$~(the graph obtained by deleting one edge from $K_5$), then there exists a triangle $u_1u_2u_3$ in the block such that the remaining vertices in the block are an independent set and every vertex is adjacent to two or three vertices of $u_1u_2u_3$.
    In this case, we call it a \emph{$K_5^-$}-block.
\end{claim}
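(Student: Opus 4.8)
The plan is to use the two apex vertices coming from the $K_5^-$ to force every triangle of the block to pass through one fixed triangle. Writing the $K_5^-$ as $K_5$ with the edge $ab$ deleted, the other three vertices $u_1,u_2,u_3$ are pairwise adjacent and each of $a,b$ is adjacent to all of them, while $a$ and $b$ are non-adjacent. I take $u_1u_2u_3$ as the candidate central triangle $T$ and set $W=V(B)\setminus\{u_1,u_2,u_3\}$, so that $a,b\in W$; the goal becomes to show that $W$ is an independent set and that each $w\in W$ is adjacent to exactly two or three vertices of $T$.

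The method is to run the growing process of the block $B$ starting from the edge set $E=\{u_1u_2,u_2u_3,u_1u_3\}$; since $E$ is a non-empty edge set of $B$, this regenerates all of $B$. The heart of the proof is the invariant that every triangle ever produced contains a central edge $u_iu_j$, equivalently at least two vertices of $T$. Growing from a central edge $u_iu_j$ always produces a triangle $u_iu_jw$ and keeps the invariant, so the real content is to forbid growing a triangle $wu_ix$ from a spoke $wu_i$ (with $w\in W$ and, inductively, $w$ adjacent to two vertices of $T$) in which $x\notin T$. I claim each such configuration contains a $P_6^2$.

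Two template paths carry the whole case analysis. If $x\notin T$ and $x$ is not an apex, then, after relabelling $T$ so that $w$ is adjacent to $u_1,u_2$, the vertices $x,w,u_1,u_2,u_3,a$ in this order form a $P_6^2$: the distance-one edges $xw,wu_1,u_1u_2,u_2u_3,u_3a$ and the distance-two edges $xu_1,wu_2,u_1u_3,u_2a$ are all present. When $x$ is an apex, say $x=a$ so that $w$ is adjacent to $a$, the five vertices of $T\cup\{a,w\}$ no longer suffice and the \emph{second} apex $b$ must be used: the ordering $b,u_3,u_2,u_1,w,a$ is then a $P_6^2$. The same two templates, read with $w$ itself an apex or with $x$ a second vertex of $W$ (which would create an edge inside $W$), eliminate all remaining possibilities. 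Hence every triangle of $B$ is $u_1u_2u_3$ or of the form $u_iu_jw$, every edge of $B$ is a central edge or a spoke $wu_i$, no edge joins two vertices of $W$, and each $w\in W$ lies in a triangle $u_iu_jw$ and so is adjacent to two or three vertices of $T$; this is precisely the $K_5^-$-block structure.

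I expect the main obstacle to be the bookkeeping rather than any single idea: in each application of a template one must verify that the six chosen vertices are distinct, and in particular recognise exactly when $T\cup\{a,w\}$ has only five vertices so that the second apex $b$ has to be recruited as a sixth vertex. This is the point at which the hypothesis that the block contains a full $K_5^-$ (two apexes, not merely a $K_4$) is indispensable. Once the two template paths are fixed, I expect every forbidden local configuration to reduce, after relabelling $u_1,u_2,u_3$, to one of them, so the remaining verifications should be routine.
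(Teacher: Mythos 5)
Your proof is correct and follows essentially the same route as the paper: fix the triangle of the $K_5^-$ opposite the missing edge as the central triangle, regenerate the block by the growing process, and kill any triangle grown from a spoke by exhibiting an explicit $P_6^2$ (your two templates are, up to relabelling, exactly the two configurations $v_1u_4u_2u_3u_1u_5$ and $v_2wu_3u_1u_2u_4$ used in the paper). The only cosmetic difference is that you seed the process with the central triangle rather than the whole $K_5^-$ and you handle "$w$ adjacent to an apex" by a direct $P_6^2$ instead of the paper's reduction to the earlier spoke case.
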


\begin{proof}
    Suppose we have a $K_5^-$ with vertices $u_1,u_2,u_3,u_4,u_5$ and missing edge $u_4u_5$~(see Figure~\ref{fig: K5- K4 block}).
    Let $U = \{u_1,u_2,u_3,u_4,u_5\}$.
    Then we run the growing process starting with the $K_5^-$ denoted by $B_0$.

    We first claim that $u_2u_4$ cannot grow a triangle during the process~(by symmetry, so do $u_3u_4$, $u_3u_5$ and $u_2u_5$).
    Suppose otherwise there is a triangle $u_2u_4v_1$ where $v_1 \notin \{u_1,u_2,u_3,u_4\}$.
    Since $u_4u_5$ is not an edge, $v_1 \neq u_5$.
    Then $v_1 u_4 u_2 u_3 u_1 u_5$ forms a $P_6^2$, a contradiction.
    
    So we can only grow triangles from $u_1u_2$, $u_1u_3$ or $u_2u_3$.
    Since the three edges are equivalent, suppose $u_1u_3$ grows a triangle $u_1u_3w$ where $w \notin U$.
    Notice that $wu_4$ cannot be an edge, otherwise $u_3u_4w$ can be a triangle grown from $u_3u_4$, a contradiction.
    Similarly, $wu_5$ cannot be an edge.
    $u_2w$ is a possible edge as the red edge shown in Figure~\ref{fig: K5- K4 block}.

    We cannot grow another triangle from $u_3w$~(or equivalently $u_1w$).
    Suppose otherwise, assume $u_3w$ grows a new triangle $u_3wv_2$.
    Then $v_2 w u_3 u_1 u_2 u_4$ forms a $P_6^2$, a contradiction.

    By the above arguments, during the growing process, every vertex in $V(B_i) \setminus U$ is adjacent to two or three vertices of $u_1u_2u_3$ and $V(B_i)\setminus U$ is always an independent set.
    So the claim holds when the growing process ends at $B$.
\end{proof}

\begin{claim}\label{claim: block K^4}
    If a block does not contain $K_5^-$ but contains a $K_4$.
    Let $u_1,u_2,u_3,u_4$ be the vertices of the $K_4$.
    Then there exists three edges forming a triangle~(e.g., $\{u_1u_2,u_2u_3,u_1u_3\}$) or a star~(e.g., $\{u_1u_2,u_1u_3,u_1u_4\}$) in the block such that the remaining vertices in the block are an independent set and every vertex is adjacent to one of the three edges.
    In this case, we call it a \emph{$K_4$}-block.
\end{claim}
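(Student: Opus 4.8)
The plan is to run the \emph{growing process} starting from $B_0=K_4$ on $U=\{u_1,u_2,u_3,u_4\}$ and to maintain, as an invariant throughout, that every vertex of $V(B_i)\setminus U$ is a ``leaf'' adjacent to exactly the two endpoints of one edge of the $K_4$, while the leaves together with one designated vertex of $U$ form an independent set. This is exactly the structure asserted in the statement, and the argument parallels that of Claim~\ref{claim: block K^5^-}: all obstructions are produced by exhibiting a forbidden $P_6^2$ on six of the vertices $\{u_1,u_2,u_3,u_4,w,w'\}$, together with $K_5$-freeness from Claim~\ref{claim: K5-free} and the hypothesis that the block contains no $K_5^-$.

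First I would analyze a single growth step from an edge, say $u_1u_2$, producing a new vertex $w$ with $w\sim u_1,u_2$. I claim $w$ is adjacent to neither $u_3$ nor $u_4$: if $w\sim u_3$, then $\{u_1,u_2,u_3,w\}$ is a $K_4$, and adjoining $u_4$ gives a $K_5$ when $w\sim u_4$ (impossible by Claim~\ref{claim: K5-free}) or a $K_5^-$ with missing edge $u_4w$ when $w\not\sim u_4$ (impossible by hypothesis); the case $w\sim u_4$ is symmetric. Hence every grown vertex sees exactly the two endpoints of the edge it was grown from.

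Next I would determine which edges may be used to grow. The key observation is that two \emph{disjoint} edges of the $K_4$ cannot both grow: if $w\sim u_1,u_2$ and $w'\sim u_3,u_4$, then $w,u_2,u_1,u_3,u_4,w'$ is a $P_6^2$. Thus the set of edges of the $K_4$ from which triangles are grown is pairwise intersecting, and in $K_4$ every pairwise intersecting family of edges lies inside a single triangle (say on $\{u_1,u_2,u_3\}$) or inside a single star (say the three edges at $u_1$); embedding the family into such a triangle or star gives the required three core edges. In the triangle case the remaining vertex $u_4$ is adjacent to all of $u_1,u_2,u_3$ and simply plays the role of an additional leaf. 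To close the invariant I would show the leaves form an independent set that admits no further growth: for a leaf $w$ grown from $u_1u_2$, no triangle can be grown from $u_1w$ or $u_2w$, since any such triangle $u_1wv$ (resp.\ $u_2wv$) with $v\notin U\cup\{w\}$ yields the $P_6^2$ given by $v,w,u_1,u_2,u_3,u_4$; and two leaves $w,w'$ (from any core edges) cannot be adjacent, for $w\sim w'$ produces the $P_6^2$ given by $w',w,u_1,u_2,u_3,u_4$, the same configuration also forcing $u_4\not\sim w$ in the triangle case. Consequently every block-edge incident to a leaf runs to the core, and the non-core vertices form an independent set, each adjacent to exactly one core edge, which is precisely the claimed structure.

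I expect the main obstacle to be the bookkeeping that keeps this invariant honest across the entire growing process, rather than any single inequality: one must verify that no growth step can introduce an edge between two non-core vertices or attach a leaf to more than one core edge, so that the two templates (triangle-core and star-core) are genuinely exhaustive and stable under the process. Since each potential violation reduces to one of the forbidden $P_6^2$ configurations above, the difficulty here is organizational—tracking the finitely many ways a new triangle can share an edge with $B_i$—rather than conceptual.
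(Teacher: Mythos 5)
Your proposal is correct and follows essentially the same route as the paper: run the growing process from the $K_4$, use $K_5$/$K_5^-$-freeness to show each grown vertex sees exactly the two endpoints of its core edge, use a $P_6^2$ to forbid growth from two disjoint edges and from leaf edges, and conclude via the fact that an intersecting edge family in $K_4$ sits in a triangle or a star. The only cosmetic point is that your specific $P_6^2$ ordering $w',w,u_1,u_2,u_3,u_4$ for adjacent leaves assumes $w'\sim u_1$ and needs a trivial relabeling when $w'$ is grown from $u_2u_3$ in the triangle-core case.
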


\begin{proof}
    Suppose we have a $K_4$ with vertices $u_1,u_2,u_3,u_4$~(see Figure~\ref{fig: K5- K4 block}).
    Let $U = \{u_1,u_2,u_3,u_4\}$.
    Let us run the growing process starting with the $K_4$ denoted by $B_0$.

    Since all the edges in the $K_4$ are equivalent, suppose we grow a new triangle from edge $u_1u_2$, say $u_1u_2w$.
    Notice that $wu_3$ and $wu_4$ cannot be edges, otherwise we have a $K_5^-$.

    Then we claim that we cannot grow triangles from $wu_2$~(or equivalently $wu_1$).
    Suppose otherwise, there is a triangle $wu_2v_1$.
    Since $wu_3$ and $wu_4$ are not edges, $v_1 \notin U$.
    Then $v_1 w u_2u_1u_3u_4$ forms a $P_6^2$, a contradiction.
    
    Moreover, if we already grow a triangle $u_1u_2w$ from $u_1u_2$, then we cannot grow a triangle from $u_3u_4$.
    Suppose otherwise, assume there is a triangle $u_3u_4v_2$ where $v_2 \notin U$.
    Since $wu_3$ is not an edge, $v_2 \notin U \cup \{w\}$.
    Then $v_2 u_4 u_3 u_1 u_2 w$ forms a $P_6^2$, a contradiction.

    Similarly, if we grow a triangle from $u_1u_3$, then we cannot grow a triangle from $u_2u_4$.
    And we cannot grow triangles from both $u_1u_4$ and $u_2u_3$.
    Then there exists three edges, either a triangle or a star, in the $K_4$, such that we can only grow triangles from the three edges~(in the case when we grow triangles from $u_1u_2$ and $u_1u_3$, and one of $\{u_1u_4, u_2u_3\}$).

    Moreover, during the growing process, every vertex in $V(B_i) \setminus U$ is adjacent to exactly two vertices of $u_1u_2u_3$ and $V(B_i)\setminus U$ is always an independent set.
    So the claim holds when the growing process ends at $B$.
\end{proof}

\begin{figure}
    \centering
    \includegraphics[width=0.8\linewidth]{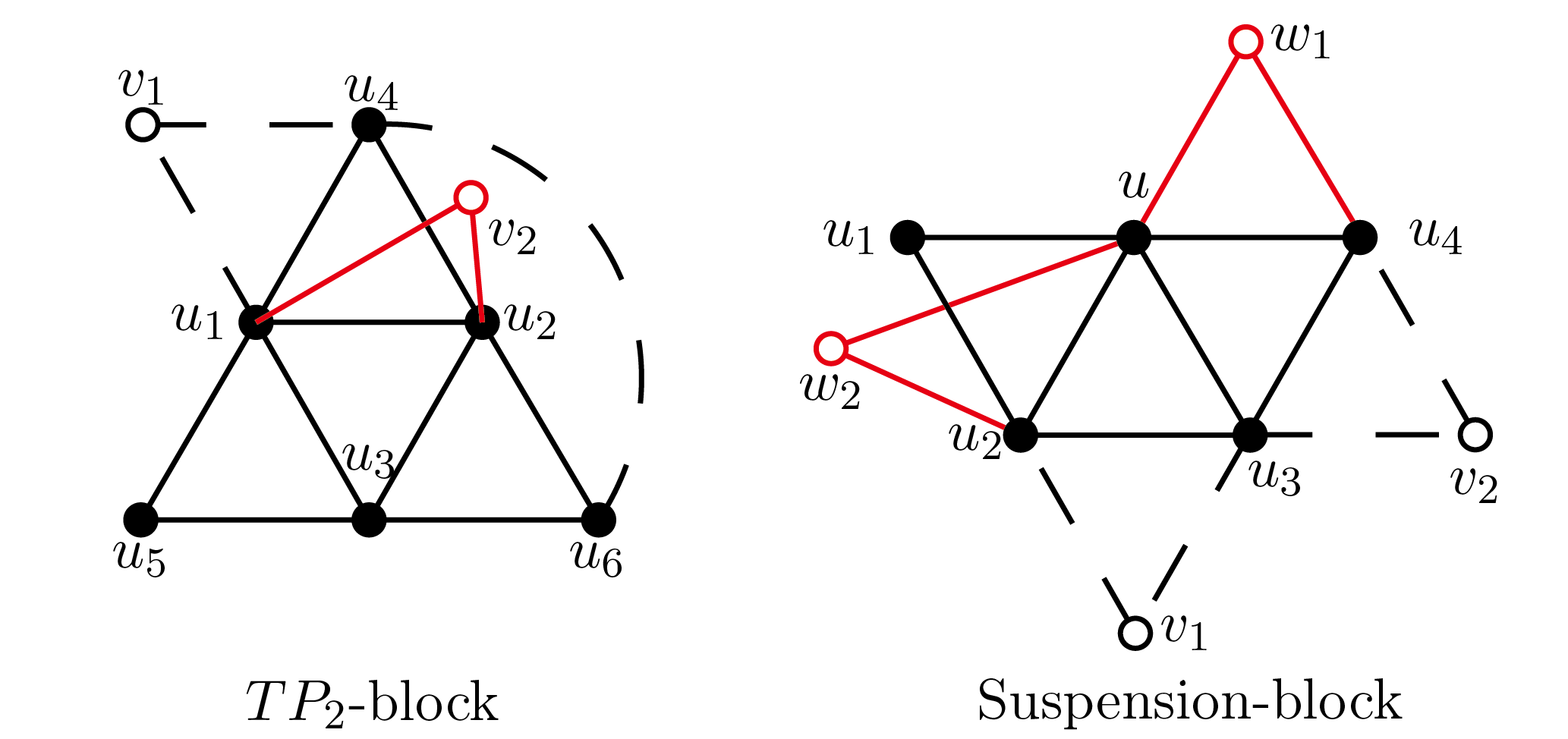}
    \caption{$TP_2$-block and suspension-block.}\label{fig: TP2 and suspension block}
\end{figure}

\begin{claim}\label{claim: block pyramid}
    If a block does not contain $K_5^-$ or $K_4$, but contains a $TP_2$, then there exists a triangle $u_1u_2u_3$ in the block such that the remaining vertices in the block are an independent set and every vertex is adjacent to exactly two vertices of $u_1u_2u_3$.
    In this case, we call it a \emph{$TP_2$}-block.
\end{claim}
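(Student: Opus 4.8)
The plan is to follow the same growing-process strategy used in Claims~\ref{claim: block K^5^-} and~\ref{claim: block K^4}. First I fix notation for the $TP_2$: write its central (inverted) triangle as $u_1u_2u_3$ and its three apex vertices as $w_1,w_2,w_3$, where $w_1$ is adjacent to $u_1,u_2$, $w_2$ to $u_2,u_3$, and $w_3$ to $u_1,u_3$; the four triangles of the $TP_2$ are then $u_1u_2u_3$, $u_1u_2w_1$, $u_2u_3w_2$ and $u_1u_3w_3$. I would start the growing process from $B_0=TP_2$ and maintain the invariant that $u_1u_2u_3$ is a triangle, that every other vertex of $B_i$ is adjacent to exactly two of $u_1,u_2,u_3$, and that the set of these ``apex'' vertices is independent. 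The base case holds because $K_4$-freeness forces each $w_r$ to miss the third central vertex (otherwise $u_1u_2u_3w_r$ would be a $K_4$), and the apexes are pairwise non-adjacent in $TP_2$.

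The inductive step splits according to the type of the edge $uv\in B_i$ from which we grow. By the invariant, $uv$ is either a central edge (both endpoints in $\{u_1,u_2,u_3\}$) or an apex edge (one apex, one central endpoint); it cannot join two apexes. If $uv$ is an apex edge, say $w_1u_1$, and we grow $w_1u_1z$, then $z\,w_1\,u_1\,u_2\,u_3\,w_2$ is a $P_6^2$, a contradiction, so no new triangle grows from apex edges (the only way $z$ avoids this is $z=u_2$, which is the already-present triangle $u_1u_2w_1$). If $uv$ is a central edge, say $u_1u_2$, and we grow $u_1u_2z$, I must show $z$ is adjacent to exactly $u_1,u_2$. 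Adjacency to $u_3$ is impossible since it would make $u_1u_2u_3z$ a $K_4$, and adjacency to another apex on the same edge would likewise create a $K_4$. The remaining danger is adjacency to an apex on an \emph{incident} edge, for instance $z\sim w_2$, which I would rule out by the $P_6^2$ given by $w_3\,u_1\,u_3\,u_2\,w_2\,z$ (and symmetrically $w_2\,u_2\,u_3\,u_1\,w_3\,z$ when $z\sim w_3$). Since each of these square paths uses only the fixed central triangle together with two apexes that remain present throughout, the argument is uniform over the growing process, and the invariant is preserved; when the process terminates at $B$ the claim follows with $u_1u_2u_3$ as the required triangle.

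The hard part will be the last subcase of the central-edge step. Unlike the forbidden configurations in the earlier claims, a newly grown apex $z$ adjacent to an apex on an incident central edge produces neither a $K_4$ nor a $K_5^-$, so the contradiction must come from a genuine length-$6$ square path; the delicate point is choosing the ordering $w_3\,u_1\,u_3\,u_2\,w_2\,z$ so that all four distance-two chords ($w_3u_3$, $u_1u_2$, $u_3w_2$, $u_2z$) together with the five path edges are simultaneously present. Confirming that the two apexes used in this $P_6^2$ are always available — which holds because the process starts from the full $TP_2$ and never deletes the original apexes $w_1,w_2,w_3$ — is exactly what makes the single-edge analysis valid for the entire block.
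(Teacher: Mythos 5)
Your proof is correct and follows essentially the same strategy as the paper: run the growing process from the $TP_2$, forbid new triangles on apex--central edges via the square path $z\,w_1\,u_1\,u_2\,u_3\,w_2$ (the paper's $v_1u_4u_1u_2u_3u_6$), and forbid an edge between apexes of two incident central edges via $w_3\,u_1\,u_3\,u_2\,w_2\,z$ (the paper's $u_5u_1u_3u_2u_6u_4$), with $K_4$-freeness handling the remaining adjacencies. The only organizational difference is that you fold the non-adjacency facts into an inductive invariant rather than establishing them for the initial $TP_2$ first, which changes nothing of substance.
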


\begin{proof}
    Suppose we have a $TP_2$ with vertices $u_1,u_2,u_3,u_4,u_5,u_6$ and $u_1u_2u_3$ is the central triangle~(see Figure~\ref{fig: TP2 and suspension block}).
    Let $U = \{u_1,u_2,u_3,u_4,u_5,u_6\}$.
    We first claim that $u_4u_6$~(or equivalently, $u_4u_5$ and $u_5u_6$) cannot be an edge, otherwise, $u_5 u_1 u_3 u_2 u_6 u_4$ forms a $P_6^2$.
    And $u_4u_3$~(or equivalently, $u_2u_5$ and $u_1u_6$) cannot be an edge, otherwise, we can find a $K_4$.

    Then we run the growing process starting with the $TP_2$ denoted by $B_0$.
    We claim that we cannot grow a triangle from $u_1u_4$~(or equivalently $u_1u_5, u_3u_5, u_3u_6, u_2u_6$ and $u_2u_4$).
    Suppose otherwise, there is a triangle $u_1u_4v_1$.
    By the above arguments, $v_1 \notin U$.
    Then $v_1 u_4 u_1 u_2 u_3 u_6$ forms a $P_6^2$, a contradiction.

    It means, we can only grow triangles from edges in $u_1u_2u_3$.
    Suppose we grow a triangle $u_1u_2v_2$ from $u_1u_2$.
    Also, by the above arguments, $v_2 \notin U$.
    Note that $u_1u_2u_3u_5u_6v_2$ is also a $TP_2$.
    $v_2$ has the same properties as $u_4$.
    And $v_2u_4$ cannot be an edge, otherwise we can find a $K_4$.
    
    As a conclusion, during the growing process, $V(B_i) \setminus U$ is always an independent set and every vertex from $V(B_i) \setminus U$ is adjacent to exactly two vertices of $\{u_1, u_2, u_3\}$.
    So the claim holds when the growing process ends at $B$.
\end{proof}

\begin{claim}\label{claim: block suspension}
    If a block does not contain $K_5^-$, $K_4$ nor $TP_2$, then there exists a vertex $u$ such that every other vertex is adjacent to $u$ and after deleting $u$, the remaining graph is triangle-free.
    In this case, we call it a \emph{suspension}-block.
\end{claim}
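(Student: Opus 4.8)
The plan is to run the same \emph{growing process} used in Claims~\ref{claim: block K^5^-}--\ref{claim: block pyramid}, but now maintaining the invariant that the partial block $B_i$ is itself a \emph{suspension}. Writing $S_i = \bigcap_{T} V(T)$ for the intersection of the vertex sets of all triangles currently in $B_i$, I would show that $S_i \neq \emptyset$ throughout, so that the final common vertex $u$ is the desired apex. Once every triangle of $B$ contains a single vertex $u$, that $u$ is automatically adjacent to every other vertex (each lies in some triangle, and every triangle meets $u$), and $B-u$ is triangle-free (a triangle avoiding $u$ would contradict $u \in \bigcap_T V(T)$), which is exactly the conclusion. Two structural facts, both used already in the earlier block claims, drive everything: since $B$ is $K_4$-free, two triangles sharing an edge $xy$ have \emph{non-adjacent} third vertices, and hence the caps of two \emph{different} edges of one triangle are always distinct; and since $B$ is $TP_2$-free, no triangle can have all three of its edges each lying in a second triangle, for otherwise the three (distinct) caps would complete a $TP_2$ as in Definition~\ref{def: pyramid}.

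For the induction I start with $B_0$ a single triangle, where $S_0$ is all three vertices. When a triangle $xyz$ with $z \notin V(B_i)$ is grown from an edge $xy$ of $B_i$, the new intersection is $S_{i+1} = S_i \cap \{x,y\}$ because $z$ is new, so it suffices to show $S_i \cap \{x,y\} \neq \emptyset$, i.e.\ that one can never grow from an edge avoiding $S_i$. If $|S_i|\ge 2$ this is immediate: then $|S_i|\in\{2,3\}$, and every edge of $B_i$ is incident to $S_i$ (if $|S_i|=3$ then $B_i$ is a single triangle; if $S_i=\{p,q\}$ then every triangle is of the form $pq\cdot$, so every edge meets $\{p,q\}$).

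The heart of the argument is the case $|S_i|=1$, say $S_i=\{u\}$. Put $F = B_i - u$. Since all triangles of $B_i$ contain $u$, the graph $F$ is triangle-free; since every non-$u$ vertex lies in some triangle $u+e$, $F$ has no isolated vertex, and the block-connectivity of $B_i$ forces $F$ to be connected; finally $|S_i|=1$ means $F$ is \emph{not} a star (a star centered at $b$ would place $b$ in every triangle). Suppose for contradiction a triangle $vwz$ is grown from an edge $vw$ of $F$; then $u\not\sim z$ by $K_4$-freeness and $uvw$ is a triangle of $B_i$. As $F$ is connected and non-star, some vertex $p\notin\{v,w\}$ is adjacent in $F$ to $v$ or $w$, say to $w$; note $v\not\sim p$ ($F$ is triangle-free), so $uw$ is an \emph{inner} edge, lying in both $uvw$ and $uwp$. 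I now split on the edge $uv$: if $uv$ is also inner, then the triangle $uvw$ has its three edges capped by $z$ (on $vw$), $p$ (on $uw$) and a third vertex (on $uv$), giving a $TP_2$, a contradiction; if $uv$ is not inner, then $v$ is a leaf of $F$, and since $F$ is not a star centered at $w$ the vertex $w$ has a non-leaf neighbor $p\neq v$, supplying a further neighbor $t\in F\setminus\{v,w\}$, whereupon $z,v,w,u,p,t$ is a $P_6^2$ (the strip of four triangles $vwz,\,uvw,\,uwp,\,upt$), again a contradiction.

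The main obstacle is exactly this last case: guaranteeing that, once the apex is unique, no growth from a non-apex edge is possible. The delicate point is that neither forbidden configuration appears on its own — one must choose between producing a $TP_2$ (when both $u$-edges of the grown triangle are inner) and producing a $P_6^2$ (when one $u$-edge is a boundary edge, exploiting the connected non-star triangle-free graph $F$ to locate the extra vertex $t$ that closes the strip). Checking that these two alternatives are \emph{exhaustive} reduces to the elementary fact that a connected, triangle-free, non-star graph is neither a single edge nor a star, and so always furnishes the required neighbour; the remaining bookkeeping — the distinctness of the six vertices of the $P_6^2$ and the routine $|S_i|\ge 2$ cases — I expect to be straightforward.
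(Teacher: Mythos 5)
Your proposal is correct and follows essentially the same route as the paper: both run the growing process while maintaining that all triangles of the partial block hang off a common apex $u$, and both rule out growing a triangle from an edge of $B_i-u$ by exhibiting either a $TP_2$ (when both endpoints of that edge have further neighbours in $B_i-u$) or a $P_6^2$ formed by a strip of four triangles (when one endpoint is a leaf). The only difference is bookkeeping: the paper maintains the invariant that every edge of $N_{B_i}(u)$ lies in a $P_4$ inside $N_{B_i}(u)$, whereas you recover the needed $P_4$ (or the three caps) on the fly from the connectivity, triangle-freeness and non-star-ness of $B_i-u$; both versions check out.
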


\begin{proof}
    It is easy to verify the claim when the block contains at most two triangles.
    So we may assume the block contains at least three triangles.
    We claim that in the block, there must exist a path $u_1u_2u_3u_4$ and a vertex $u$ such that $u$ is adjacent to $u_1,u_2,u_3,u_4$~(see Figure~\ref{fig: TP2 and suspension block}).
    We can start the growing process with a triangle and then it is easy to prove the above statement.

    Now we run the growing process starting with the edge set $E = \{u u_1, u u_2, u u_3, u u_4\}$ denoted by $B_0$.
    During the process, let $N_{B_i}(u)$ be the neighborhood of $u$ in the current block ${B_i}$.
    Note that initially, $V(B_0) = \{u\} \cup N_{B_0}(u)$ and $N_{B_0}(u)$ is a $P_4$.

    We are going to prove that during the process, we maintain the following properties:
    \begin{enumerate}[label= (\arabic*)]
        \item $V(B_i) = \{u\} \cup N_{B_i}(u)$;
        \item $N_{B_i}(u)$ is triangle-free.
        \item every edge in $N_{B_i}(u)$ is contained in a $P_4$ in $N_{B_i}(u)$.
    \end{enumerate}

    The three properties clearly hold for $B_0$.
    Assume the properties hold for $B_i$, we prove they also hold for $B_{i+1}$.
    We first claim that we cannot grow a triangle from an edge not incident to $u$.
    Suppose otherwise, since all the edges not incident to $u$ are contained in $N_{B_i}(u)$, and every edge in $N_{B_i}(u)$ is contained in a $P_4$, we may assume the $P_4$ is $u_1u_2u_3u_4$.
    Then the edge is one of $u_2u_3$, $u_3u_4$ and $u_1u_2$~(by symmetry, we only need to consider $u_2u_3$ and $u_3u_4$).
    If we grow a triangle from $u_2u_3$, say $u_2u_3v_1$, then $v_1 \notin B_i$, otherwise we have a $K_4$.
    Then we have a $TP_2$ with vertex set $\{v_1, u_3, u_2, u, u_1, u_4\}$, a contradiction.
    If we grow a triangle from $u_3u_4$, say $u_3u_4v_2$, then again $v_2 \notin B_i$, and then $v_2 u_4 u_3 u u_2 u_1$ forms a $P_6^2$, a contradiction.
    Thus we can only grow triangles from edges incident to $u$.
    If it brings a new vertex $v$ to the block, then clearly property (1) holds.
    Then property (2) also holds, otherwise we have a $K_4$.

    Finally, if we grow a triangle from edge $u w$. 
    $uw$ is contained in some triangle $u w w'$ in $B_i$.
    And $ww'$ is contained in a $P_4$ in $N_{B_i}(u)$ by property (3).
    We may assume the $P_4$ is $u_1u_2u_3u_4$.
    Then $w \in \{u_1,u_2,u_3,u_4\}$.
    If $w = u_4$~(or equivalently $w = u_1$), assume we grow a triangle $uu_4w_1$.
    Then $w_1 \notin B_i$, otherwise we have a $K_4$.
    Then we bring a new edge $w_1u_4$ to $N_{B_{i+1}}(u)$.
    Clearly, $w_1u_4$ is contained in a $P_4$ within $N_{B_{i+1}}(u)$: $w_1 u_4 u_3 u_2$.
    If $w = u_2$~(or equivalently $w = u_3$), assume we grow a triangle $uu_2w_2$.
    Then $w_2 \notin B_i$, otherwise we have a $K_4$.
    Then we bring a new edge $w_2u_2$ to $N_{B_{i+1}}(u)$.
    Clearly, $w_2u_2$ is contained in a $P_4$ within $N_{B_{i+1}}(u)$: $w_2 u_2 u_3 u_4$.
    Thus property (3) also holds for $B_{i+1}$.

    As a conclusion, the three properties hold for every $B_i$ during the growing process.
    In particular, they hold for the final block $B$, which proves the claim.
\end{proof}

Now we have classified every block of $G$ into four types: $K_5^-$-block, $K_4$-block, $TP_2$-block and suspension-block.

\subsection{Upper bound of $t(G)$ and extremal graphs}\label{sec: extremal structure}

\begin{figure}
        \centering
        \includegraphics[width=\linewidth]{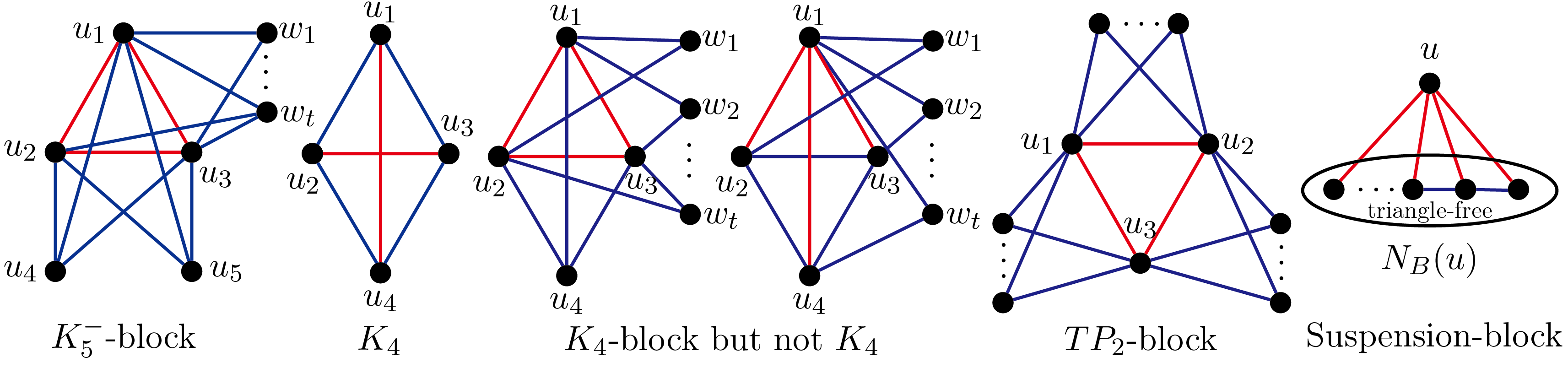}
        \caption{Color the edges from each block with blue and red.}\label{fig: color}
    \end{figure}

Now we color the edges of $G$ as follows~(see Figure~\ref{fig: color}):
\begin{itemize}
    \item for a $K_5^-$ block, color the edges of the triangle $u_1u_2u_3$ red, and color the other edges blue.
    \item for a $K_4$ block which is just a $K_4$ with vertices $u_1u_2u_3u_4$, then color $u_1u_2$ and $u_3u_4$ red, and color the other edges blue.
    \item for a $K_4$ block which is not a $K_4$, color the three edges in Claim~\ref{claim: block K^4} red, and color the other edges blue;
    \item for a pyramid block, color the triangle $u_1u_2u_3$ red, and color the other edges blue;
    \item for a suspension block, color the edges incident to the central vertex red, and color the other edges blue;
\end{itemize}

In the following, let $G_b$ be the subgraph of $G$ formed by the blue edges.
The following claim easily follows from the coloring rules.

\begin{claim}\label{claim: triangle vs blue}
    In each block except $K_5^-$-blocks, the number of triangles is at most the number of blue edges.
    In each $K_5^-$-block, the number of triangles is at most one more than the number of blue edges.
    $G_b$ is triangle-free.
\end{claim}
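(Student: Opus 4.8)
The plan is to prove all three assertions by a case analysis over the four block types, reading the number of triangles and the number of blue edges directly off the structural descriptions in Claims~\ref{claim: block K^5^-}--\ref{claim: block suspension} together with the coloring rules. I would dispose of three of the types first. For a $K_5^-$-block with red central triangle $u_1u_2u_3$, let $a$ be the number of block vertices adjacent to exactly two of $u_1,u_2,u_3$ and $b$ the number adjacent to all three; then the block has $1+a+3b$ triangles and exactly $2a+3b$ blue edges, so the number of triangles exceeds the number of blue edges by $1-a\le 1$, which is exactly where the single extra triangle permitted for $K_5^-$-blocks is spent. For a $TP_2$-block with red central triangle $u_1u_2u_3$ and $m\ge 3$ independent outer vertices, each joined to precisely two of the $u_i$, there are $1+m$ triangles and $2m$ blue edges, and $1+m\le 2m$. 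For a suspension-block the count is cleanest: every triangle uses the apex $u$, so the triangles are in bijection with the edges of the triangle-free graph $G[N(u)]$, which are exactly the blue edges, giving equality.

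The $K_4$-blocks carry the main obstacle. If the block is a single $K_4$, the matching coloring is forced: coloring $u_1u_2,u_3u_4$ red leaves a blue $4$-cycle with $4$ blue edges to match the $4$ triangles (the triangle coloring would leave only $3$ blue edges). If the block is larger, I would take the three red edges of Claim~\ref{claim: block K^4}; writing $p,q,r$ for the numbers of outer vertices grown from each red edge, the block has $4+p+q+r$ triangles and, when the red edges form a triangle, $3+2(p+q+r)$ blue edges, so the desired inequality reduces to $p+q+r\ge 1$, which holds since the block is not a single $K_4$. The difficulty is that Claim~\ref{claim: block K^4} also permits the three growing edges to form a \emph{star} $u_1u_2,u_1u_3,u_1u_4$, in which case the remaining edges $u_2u_3,u_2u_4,u_3u_4$ of the $K_4$ would all be blue and form a blue triangle, destroying the triangle-freeness of $G_b$. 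I would resolve this by always choosing the red triangle whenever the growing edges lie in a common triangle (which is automatic as soon as at most two edges carry grown vertices, since any two growing edges come from distinct perfect matchings and hence share a vertex), and by recoloring in the genuinely forced case: if outer vertices grow from all three star edges then $p,q,r\ge 1$, so $p+q+r\ge 3$; recoloring one of $u_2u_3,u_2u_4,u_3u_4$ red breaks the offending blue triangle, and the inequality $4+p+q+r\le 2+2(p+q+r)$ still holds because $p+q+r\ge 2$. This forced-star configuration is the one place the naive coloring fails, so it is the crux of the argument.

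For the triangle-freeness of $G_b$ I would argue locally and then globally. In a $K_5^-$-block, a $TP_2$-block, or an enlarged $K_4$-block colored with a red triangle, the blue edges join the three red-triangle vertices to an independent set of outer vertices (together with the extra apex $u_4$ in the $K_4$ case), so the blue subgraph of the block is bipartite and hence triangle-free; a single $K_4$ contributes a blue $4$-cycle, and a suspension-block contributes $E(G[N(u)])$, which is triangle-free by Claim~\ref{claim: block suspension}. The recolored forced-star block is no longer bipartite, but a one-line inspection of the blue neighborhoods of $u_1,\dots,u_4$ (whose mutual edges are now red) shows it has no blue triangle. Finally, since the three edges of any triangle of $G$ all lie in one block and the blocks partition $E(G)$, any blue triangle of $G$ would live inside a single block; as no block contains a blue triangle, $G_b$ is triangle-free, completing the claim.
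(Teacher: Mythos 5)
Your proof is correct, and it does considerably more than the paper, which offers no argument for this claim beyond the remark that it ``easily follows from the coloring rules.'' Your block-by-block counts all check out against Claims~\ref{claim: block K^5^-}--\ref{claim: block suspension}: $1+a+3b$ triangles versus $2a+3b$ blue edges in a $K_5^-$-block (difference $1-a\le 1$, which is exactly the one extra triangle the claim allows), $1+m$ versus $2m$ in a $TP_2$-block, the bijection between triangles and the edges of the triangle-free graph on $N(u)$ in a suspension-block, $4$ versus $4$ for a bare $K_4$, and $4+m$ versus $3+2m$ for an enlarged $K_4$-block whose red edges form a triangle. The global step (a blue triangle of $G$ would lie inside a single block, since the three edges of any triangle belong to the same block and blocks partition $E(G)$) is also the right way to pass from local to global triangle-freeness.

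More importantly, you have put your finger on a genuine defect in the paper's coloring rule rather than in your own argument. Claim~\ref{claim: block K^4} explicitly allows the three growing edges to be the star $u_1u_2,u_1u_3,u_1u_4$, and the configuration in which all three star edges grow outer vertices really is $P_6^2$-free (any six of its seven vertices must include at least two of the degree-two outer vertices, which would have to play the two degree-two roles of $P_6^2$; but their neighborhoods both contain $u_1$, whereas those two roles have disjoint neighborhoods), so this case can occur as a block. Coloring only the star red, as the paper's rule literally prescribes, leaves $u_2u_3,u_2u_4,u_3u_4$ as a blue triangle and falsifies the third assertion. Your repair --- observe that the growing edges form an intersecting family in $K_4$ and hence lie in a triangle unless they are exactly a full star, and in that forced case color a fourth edge of the $K_4$ red --- is sound: the count $4+p+q+r\le 2+2(p+q+r)$ holds since $p+q+r\ge 3$, the blue subgraph of the block is then triangle-free by inspection, and the modification is compatible with Eq~(\ref{eq: basic inequalities}), since $e_r\ge 3B$ only requires each $K_5^-$-block to contribute its three red triangle edges. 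So your proof is not merely a different route: it supplies the verification the paper omits and patches the one case where the paper's coloring, as stated, does not deliver the claim.
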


Note each $K_5^-$-block contains a red triangle, and each red triangle can only belong to one $K_5^-$-block.
Let $e_{b}$ be the number of blue edges and $e_r$ be the number of red edges.
Let $B$ be the number of $K_5^-$-blocks.
Since $G_b$ is triangle-free, then by Mantel's Theorem~\ref{thm: mantel} we have $e_b \le t(n,2)$. 
As a conclusion, we have the following inequalities:

\begin{equation}\label{eq: basic inequalities}
\begin{aligned}
    t(G) & \le e_b + B,\\ 
    e(G) & = e_b + e_r \le t(n,2) + f(n) \\
    e_r & \ge 3B,\\
    e_b & \le t(n,2)\\
\end{aligned}
\end{equation}

It is easy to solve Eq~(\ref{eq: basic inequalities}) and then we have
\begin{equation}\label{eq: result of optimization}
\begin{aligned}
    t(G) \le t(n,2) + \left \lfloor \frac{f(n)}{3} \right \rfloor.
\end{aligned}
\end{equation}
Here $t(G)$ achieves maximum when $e_b = t(n,2)$ and $B = \lfloor \frac{f(n)}{3} \rfloor$.
Note that $t(n,2) + \left \lfloor \frac{f(n)}{3} \right \rfloor$ is very close to $t(n,2) + g(n)$.
We only need some more detailed analysis to prove Theorem~\ref{thm: main P62}.
For the values of $t(n,2)$, $f(n)$ and $g(n)$, edge-extremal graphs and triangle-extremal graphs for different $n$, please refer to Table~\ref{tab: final}.

\begin{table}[t]
    \centering
\begin{tabular}{|c|c|c|c|c|c|}
\hline
$n$    & $t(n,2)=\lfloor n^2/4 \rfloor$ & $f(n)$ & $g(n)$ & edge-extremal graph                   & $K_3$-extremal graph \\ \hline
$6k$   & $9k^2$                  & $3k$   & $k$    & $H_{n}^{3k}$                          & $H_{n}^{3k}$         \\ \hline
$6k+1$ & $9k^2 + 3k$             & $3k$   & $k$    & $H_{n}^{3k}$ and $F_{n}^{3k+1,j}$     & $H_{n}^{3k}$         \\ \hline
$6k+2$ & $9k^2+6k+1$             & $3k$   & $k-1$  & $F_{n}^{3k+1,j}$ and $F_{n}^{3k+2,j}$ & $H_{n}^{3k}$         \\ \hline
$6k+3$ & $9k^2+9k+2$             & $3k+1$ & $k-1$  & $H_{n}^{3k+3}$ and $F_{n}^{3k+2,j}$   & $H_{n}^{3k+1}$       \\ \hline
$6k+4$ & $9k^2+12k+4$            & $3k+2$ & $k$    & $H_{n}^{3k+3}$                        & $H_{n}^{3k+3}$       \\ \hline
$6k+5$ & $9k^2+15k+6$            & $3k+3$ & $k+1$  & $H_{n}^{3k+3}$                        & $H_{n}^{3k+3}$    \\ \hline  
\end{tabular}
\caption{The values of $t(n,2)$, $f(n)$, $g(n)$, edge-extremal graphs and triangle-extremal graphs for different $n$.}\label{tab: final}
\end{table}

\textbf{Case 1:} $n \equiv 0,1,5 \pmod 6$.
In this case, $g(n) = \frac{f(n)}{3}$, then Eq~(\ref{eq: result of optimization}) proves that $\ex(n, K_3, P_6^2) = t(n,2) + g(n)$.
Then we characterize the extremal graphs.

In the inequalities Eq~(\ref{eq: basic inequalities}), when $t(G) = \lfloor \frac{n^2}{4} \rfloor + g(n)$, we must have $e_b = \lfloor \frac{n^2}{4} \rfloor$, $e_r = f(n)$ and $B = \lfloor \frac{f(n)}{3} \rfloor$.
Thus, $e(G) = \lfloor \frac{n^2}{4} \rfloor + f(n)$.
By Theorem~\ref{thm: katona P62 extremal structure}, when $n \equiv 0 \pmod 6$, the unique edge-extremal graph is $H_{n}^{n/2}$, so the unique triangle-extremal graph is also $H_{n}^{n/2}$.
When $n \equiv 1 \pmod 6$, the edge-extremal graphs are $H_{n}^{\lfloor n/2 \rfloor}$ or $F_{n}^{\lceil n/2 \rceil, j}$.
A simple calculation shows that the number of triangles in $F_{n}^{\lceil n/2 \rceil, j}$ is less than $\lfloor \frac{n^2}{4} \rfloor + g(n)$.
So the unique triangle-extremal graph is $H_{n}^{\lfloor n/2 \rfloor}$.
When $n \equiv 5 \pmod 6$, the unique edge-extremal graph is $H_{n}^{\lceil n/2 \rceil}$, so the unique triangle-extremal graph is also $H_{n}^{\lceil n/2 \rceil}$.

\textbf{Case 2:} $n \equiv 4 \pmod 6$.
In this case, $g(n) = \frac{f(n)-2}{3}$, Eq~(\ref{eq: result of optimization}) also proves that $\ex(n, K_3, P_6^2) = t(n,2) + g(n)$.

If $e_b \le t(n,2) - 2$, then combining this inequality and Eq~(\ref{eq: basic inequalities}), we have $t(G) \le t(n,2) + g(n) - 1$, a contradiction.

Then $e_b \ge t(n,2) - 1$ and $B \ge g(n)$.
By Proposition~\ref{prop: almost bipartite}, $G_b$ is a complete bipartite graph or can be obtained by deleting one edge from a complete bipartite graph.
Let $X$ and $Y$ be the two parts of $G_b$. Then $|X|,|Y| \ge n/2-1 \ge 5$.
Since $B \ge g(n) \ge 2$, there exists at least one red triangle in one part.
By Proposition~\ref{prop: almost bipartite with triangle}, $G$ is one of $\{F_{n}^{n/2,j}, H_{n}^{n/2+1}, F_{n}^{n/2-1}\}$.
A simple calculation shows that only $H_{n}^{n/2+1}$ achieves the maximum number of triangles.

\textbf{Case 3:} $n \equiv 2 \pmod 6$.
In this case, $g(n) = \lfloor \frac{f(n)}{3} \rfloor -1 $.
So Eq~(\ref{eq: result of optimization}) only gives $t(G) \le t(n,2) + g(n) + 1$.
We need further analysis to prove $\ex(n, K_3, P_6^2) = t(n,2) + g(n)$.

If $e_b \le t(n,2) - 2$, then combining this inequality and Eq~(\ref{eq: basic inequalities}), we have $t(G) \le t(n,2) + g(n) - 1$, a contradiction.
So $e_b \ge t(n,2) - 1$ and $B \ge g(n)$.
By a similar analysis as Case 2, we can conclude that $G$ is one of $\{F_{n}^{n/2,1}, H_{n}^{n/2-1}, F_{n}^{n/2+1,2}\}$.
And only $H_{n}^{n/2-1}$ achieves the maximum number of triangles.
So $H_{n}^{n/2-1}$ is the unique triangle-extremal graph and $t(G) = \ex(n, K_3, P_6^2) = t(n,2) + g(n)$.

\textbf{Case 4:} $n \equiv 3 \pmod 6$.
Like Case 3, Eq~(\ref{eq: result of optimization}) only gives $t(G) \le t(n,2) + g(n) + 1$ in this case.
If $e_b \le t(n,2) - 3$, then combining this inequality and Eq~(\ref{eq: basic inequalities}), we have $t(G) \le t(n,2) + g(n) - 1$, a contradiction.

So $e_b \ge t(n,2) - 2$.
By Proposition~\ref{prop: almost bipartite}, $G_b$ is a complete bipartite graph or can be obtained by deleting one or two edges from a complete bipartite graph.
Let $X$ and $Y$ be the two parts of $G_b$. Then $|X|,|Y| \ge \lfloor n/2 \rfloor -1 \ge 5$.
If $e_b \ge t(n,2) - 1$, then $B \ge g(n) \ge 1$.
If $e_b = t(n,2) - 2$, then $B \ge g(n) + 2$.
Note that if there exists a red triangle with two vertices in one part and one vertex in the other part, then it has to use two missing edges in $G_b$ between $X$ and $Y$.
Then in both cases, there exists a red triangle in one part.
By Proposition~\ref{prop: almost bipartite with triangle}, $G$ is one of $\{F_{n}^{\lceil n/2 \rceil,2}, F_{n}^{\lfloor n/2 \rfloor}, H_{n}^{\lfloor n/2 \rfloor - 1}, H_{n}^{\lceil n/2 \rceil + 1} \}$.
A simple calculation shows that only $H_{n}^{\lceil n/2 \rceil + 1}$ achieves the maximum number of triangles.
And $t(G) = \ex(n, K_3, P_6^2) = t(n,2) + g(n)$.

As a conclusion, in all cases, we have proved Theorem~\ref{thm: main P62}.
\section{Concluding remarks}\label{sec: conclusion}

In this paper, we determined the maximum number of triangles in a $P_6^2$-free graph on $n$ vertices.
Our result holds for $n \ge 11$.
We believe it is possible to determine the exact value of $\ex(n, K_3, P_6^2)$ for every $n$, with more sophisticated analysis.
However, that would need a lot of detailed investigation.
For the sake of simplicity of the paper, we do not include that here.

This result is a fortunate case since the path is not too long, so that we can obtain a good understanding of the local structures.
For example, this can be seen in the discharging proof and in the classification of the blocks.
For general $P_k^2$ with $k \ge 7$, the local structures would be much more complicated.
New techniques may be needed to solve the problem.

Another direction is to consider the maximum number of triangles in the Triangular Pyramid graph $TP_k$.
When studying $\ex(n, K_3, P_6^2)$, we need the Tur\'an number of $P_6^2$ as a key ingredient.
In the studies of $\ex(n,K_3, TP_2)$~\cite{LV2024113682} and $\ex(n, K_3, P_5^2)$~\cite{MUKHERJEE2024113866}, both rely on the Tur\'an number of $TP_2$ and $P_5^2$, respectively.
Since we already have an understanding of $\ex(n, TP_3)$~(Theorem~\ref{thm: ghosh TP3})~\cite{GHOSH202275}, perhaps it is possible to study $\ex(n, K_3, TP_3)$.
For $TP_k$ with $k \ge 4$, even the Tur\'an number $\ex(n, TP_k)$ is still unknown.
In~\cite{GHOSH202275}, Ghosh, Gy\H{o}ri, Paulos, Xiao and Zamora proposed a conjecture about the extremal structure of $TP_4$.
\begin{conjecture}[\cite{GHOSH202275}]\label{conj: TP4 extremal}
    For $n$ sufficiently large, 
    \[
        \ex(n, TP_4) = \frac{n^2}{4} + \Theta(n^{4/3}).
    \]
\end{conjecture}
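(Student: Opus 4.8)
The plan is to prove the conjecture in the standard two-sided manner for a forbidden graph of chromatic number three, treating the leading term and the second-order term separately. Since $TP_4$ is a triangulation of a triangle it is $K_4$-free (no vertex has a triangle in its neighbourhood, each neighbourhood being a path or a hexagon) and $3$-chromatic, so the Erd\H{o}s--Stone--Simonovits theorem already gives $\ex(n,TP_4)=\lfloor n^2/4\rfloor+o(n^2)$, and the entire content of the conjecture lies in showing that the deviation from $t(n,2)$ has order exactly $n^{4/3}$. I would phrase this deviation through the decomposition family of $TP_4$: a near-extremal $TP_4$-free graph behaves like $T(n,2)$ together with a graph added \emph{inside} the two classes, and this added inner graph must avoid every configuration which, placed inside a class of an almost complete bipartite host, completes a copy of $TP_4$. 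The whole problem then reduces to showing that this family of forbidden inner configurations has extremal number $\Theta(n^{4/3})$.

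For the lower bound I would take $A,B$ with $|A|=|B|=\lfloor n/2\rfloor$, join them completely (giving $t(n,2)$ edges), and insert inside each of $A$ and $B$ a graph of girth at least $8$ with $\Theta(n^{4/3})$ edges, for instance the incidence graph of a generalized quadrangle $GQ(q,q)$ (Benson's construction), which contains no cycle of length at most $7$. To verify that $G=K_{A,B}+H_A+H_B$ is $TP_4$-free, note that because $H_A,H_B$ are triangle-free every triangle of $G$ consists of one inner edge together with an apex in the opposite class; consequently any copy of $TP_4$ in $G$ induces a $2$-colouring of $V(TP_4)$ by the classes $A,B$ with no monochromatic triangular face, and all monochromatic edges of such a colouring must be realised inside $H_A$ or inside $H_B$. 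The key finite claim is then that $TP_4$ has vertex-arboricity three, i.e.\ in every such $2$-colouring some class induces a graph containing a cycle, and moreover a short one (length at most $7$); since $H_A,H_B$ have girth $8$ this rules out every embedding, and counting edges gives $\ex(n,TP_4)\ge t(n,2)+\Omega(n^{4/3})$.

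For the upper bound I would combine Erd\H{o}s--Simonovits stability with an embedding argument. Stability first yields a near-balanced bipartition $(A,B)$ of any near-extremal $TP_4$-free graph under which the number of edges inside $A$ and inside $B$ is $o(n^2)$, which I would then bootstrap to the sharp bound. The heart is an embedding lemma: taking the $3$-colouring of $TP_4$, placing one colour class (an independent set) in $B$ and the other two classes in $A$ exhibits, inside $A$, the bipartite graph between two colour classes of the triangular grid, which contains a $6$-cycle; hence a $6$-cycle lying inside $A$ whose vertices retain enough common neighbourhood in the almost complete bipartite part can be completed greedily to a full copy of $TP_4$. Therefore the graph induced inside $A$ (and inside $B$) must be $C_6$-free, and the Bondy--Simonovits even-cycle theorem bounds each by $O(n^{4/3})$, giving $e(G)\le t(n,2)+O(n^{4/3})$.

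The main obstacle is the precise determination of the inner forbidden family, which is what pins the exponent to $4/3$ from both sides. On the upper side one must show that some $2$-colouring of $TP_4$ forces a $6$-cycle inside a single class with the complementary class independent, so that $C_6$-freeness of the inner graph is genuinely necessary; on the lower side one must show that no $2$-colouring of $TP_4$ splits it into two induced forests and that the shortest forced monochromatic cycle always has length at most $7$, so that a girth-$8$ construction is admissible. This is exactly the assertion that the side-$4$ triangular grid has vertex-arboricity three yet can be blocked by excluding only short cycles. Establishing these finite structural facts about $TP_4$, together with the embedding lemma that converts an inner $C_6$ in a near-complete bipartite host into an actual $TP_4$, is where the real work lies; the stability reduction and the even-cycle bound are then comparatively routine.
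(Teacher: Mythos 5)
First, note what you are trying to prove: this statement is an open conjecture of Ghosh, Gy\H{o}ri, Paulos, Xiao and Zamora, quoted in the concluding remarks precisely as a direction for future work. The paper explicitly says that for $TP_k$ with $k \ge 4$ even the Tur\'an number $\ex(n,TP_k)$ is unknown, and it offers no proof of this conjecture. So there is no argument of the paper to compare yours against; a correct argument here would be a new theorem.

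Second, your sketch contains a fatal error, located exactly at what you call the ``key finite claim.'' You assert that in every $2$-colouring of $V(TP_4)$ with no monochromatic triangle, some colour class induces a (short) cycle --- equivalently, that $TP_4$ cannot be split into two induced forests while avoiding monochromatic triangles. This is false. Colour the vertices by rows: rows $1,3,5$ get colour $A$ and rows $2,4$ get colour $B$. Every triangle of $TP_4$ is a face $x_r^{i-1}x_r^{i}x_{r+1}^{i}$ or $x_r^{i-1}x_{r+1}^{i-1}x_{r+1}^{i}$, hence meets two consecutive rows and is bichromatic; and each colour class induces a disjoint union of paths, since each row is an induced path and non-consecutive rows span no edges. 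Consequently your lower-bound graph $K_{A,B}+H_A+H_B$ is \emph{not} $TP_4$-free: any graph with $\Theta(n^{4/3})$ edges has average degree $\Theta(n^{1/3})$ and so contains a path on many vertices, in particular vertex-disjoint copies of $P_3$ and $P_5$ (and of $P_2$ and $P_4$). Embed rows $1,3,5$ into $A$ with the row-$3$ and row-$5$ paths realised inside $H_A$, embed rows $2,4$ into $B$ with the row-$2$ edge and row-$4$ path realised inside $H_B$; all remaining edges of $TP_4$ join consecutive rows, hence are cross edges, and they are present because the bipartite part is complete. So the construction yields no lower bound of order $n^{4/3}$ at all. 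To have any chance, the extra edges must be placed in one side only (as in the graphs $H_n^i$ appearing in this paper), and the finite question that then governs the exponent is which graphs $TP_4 - I$, for $I$ an independent set, must be excluded from that single side --- not a statement about vertex-arboricity. Your upper bound also rests on an unproved bootstrap from stability ($o(n^2)$ inner edges) to ``the inner graphs are $C_6$-free,'' which must cope with vertices of small cross-degree; this is precisely the step that has resisted proof, and is why even for $TP_3$ only the asymptotic $\frac{1}{4}n^2 + n + o(n)$ is known. The proposal therefore does not establish the conjecture.
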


\section{Acknowledgments}\label{sec: acknowledgment}
The research of Wang is supported by the China Scholarship Council (No. 202506210200) and the National Natural Science Foundation of China (Grant 12571372).

\bibliography{ref.bib}

@article{XIAO20221,
title = {{The Turán number of the square of a path}},
journal = {Discrete Applied Mathematics},
volume = {307},
pages = {1-14},
year = {2022},
issn = {0166-218X},
doi = {https://doi.org/10.1016/j.dam.2021.10.003},
url = {https://www.sciencedirect.com/science/article/pii/S0166218X21004248},
author = {Chuanqi Xiao and Gyula O.H. Katona and Jimeng Xiao and Oscar Zamora},
keywords = {Turán number, Extremal graphs, Square of a path},
abstract = {The Turán number of a graph H, ex(n,H), is the maximum number of edges in a graph on n vertices which does not have H as a subgraph. Let Pk be the path with k vertices, the square Pk2 of Pk is obtained by joining the pairs of vertices with distance one or two in Pk. The powerful theorem of Erdős, Stone and Simonovits determines the asymptotic behavior of ex(n,Pk2). In the present paper, we determine the exact value of ex(n,P52) and ex(n,P62) and pose a conjecture for the exact value of ex(n,Pk2).}
}

@article{mantel1907vraagstuk,
  title={Problem 28 {(solution by H. Gouwentak, W. Mantel, J. Teixeira de Mattes, F. Schuh, W. A. Wythoff)}.},
  author={Mantel, Willem},
  journal={Wiskundige Opgaven},
  volume={10},
  pages={60--61},
  year={1907}
}

@article{haggkvist1981pancyclic,
  title={{Pancyclic graphs - connected Ramsey number}},
  author={H{\"a}ggkvist, R and Faudree, RJ and Schelp, RH},
  journal={Ars Combin},
  volume={11},
  pages={37--49},
  year={1981}
}

@article{LV2024113682,
title = {{Generalized Turán numbers for the edge blow-up of a graph}},
journal = {Discrete Mathematics},
volume = {347},
number = {1},
pages = {113682},
year = {2024},
issn = {0012-365X},
doi = {https://doi.org/10.1016/j.disc.2023.113682},
url = {https://www.sciencedirect.com/science/article/pii/S0012365X23003680},
author = {Zequn Lv and Ervin Győri and Zhen He and Nika Salia and Casey Tompkins and Kitti Varga and Xiutao Zhu},
keywords = {Blow-up graph, Turan number, Extremal problems},
abstract = {Let H be a graph and p be an integer. The edge blow-up Hp of H is the graph obtained from replacing each edge in H by a copy of Kp where the new vertices of the cliques are all distinct. Let Ck and Pk denote the cycle and path with k vertices, respectively. In this paper, we find sharp upper bounds for ex(n,K3,C33) and the exact value for ex(n,K3,P43). Moreover, we determine the graphs attaining these bounds.}
}

@article{ERGEMLIDZE2017395,
title = {A note on the maximum number of triangles in a {$C_5$}-free graph},
journal = {Electronic Notes in Discrete Mathematics},
volume = {61},
pages = {395-398},
year = {2017},
note = {The European Conference on Combinatorics, Graph Theory and Applications (EUROCOMB'17)},
issn = {1571-0653},
doi = {https://doi.org/10.1016/j.endm.2017.06.065},
url = {https://www.sciencedirect.com/science/article/pii/S1571065317301506},
author = {Beka Ergemlidze and Ervin Győri and Abhishek Methuku and Nika Salia},
keywords = {Triangles, Counting subgraphs, -free graphs, Turán type, five cycle},
abstract = {We prove that the maximum number of triangles in a C5-free graph on n vertices is at most 122(1+o(1))n3/2, improving an estimate of Alon and Shikhelman [Alon, N. and C. Shikhelman, Many T copies in H-free graphs. Journal of Combinatorial Theory, Series B 121 (2016) 146-172].}
}

@article{GHOSH202275,
title = {{The Turán number of the triangular pyramid of 3-layers}},
journal = {Discrete Applied Mathematics},
volume = {317},
pages = {75-85},
year = {2022},
issn = {0166-218X},
doi = {https://doi.org/10.1016/j.dam.2022.04.007},
url = {https://www.sciencedirect.com/science/article/pii/S0166218X22001299},
author = {Debarun Ghosh and Ervin Győri and Addisu Paulos and Chuanqi Xiao and Oscar Zamora},
keywords = {Turán number, Triangular pyramid of -layers},
abstract = {The Turán number of a graph H, denoted by ex(n,H), is the maximum number of edges in an n-vertex graph that does not have H as a subgraph. Let TPk be the triangular pyramid of k-layers. In this paper, we determine that ex(n,TP3)=14n2+n+o(n) and pose a conjecture for ex(n,TP4).}
}

@article{YUAN2022103548,
title = {Extremal graphs of the pth power of paths},
journal = {European Journal of Combinatorics},
volume = {104},
pages = {103548},
year = {2022},
issn = {0195-6698},
doi = {https://doi.org/10.1016/j.ejc.2022.103548},
url = {https://www.sciencedirect.com/science/article/pii/S0195669822000440},
author = {Long-Tu Yuan},
abstract = {An extremal graph for a given graph H is a graph with the maximum number of edges among all H-free graphs with fixed number of vertices. The pth power of a path is a graph obtained from a path by joining all pairs of vertices with distance at most p. Applying a deep theorem of Simonovits, we characterize the extremal graphs for the pth power of paths. This settles a conjecture posed by Xiao, Katona, Xiao and Zamora in a stronger form.}
}

@article{MUKHERJEE2024113866,
title = {{Exact generalized Turán number for {$K_3$} versus suspension of $P_4$}},
journal = {Discrete Mathematics},
volume = {347},
number = {4},
pages = {113866},
year = {2024},
issn = {0012-365X},
doi = {https://doi.org/10.1016/j.disc.2023.113866},
url = {https://www.sciencedirect.com/science/article/pii/S0012365X23005526},
author = {Sayan Mukherjee},
keywords = {Generalized Turán problem, Suspension of a graph, Computer programming},
abstract = {Let P4 denote the path graph on 4 vertices. The suspension of P4, denoted by Pˆ4, is the graph obtained via adding an extra vertex and joining it to all four vertices of P4. In this note, we demonstrate that for n≥8, the maximum number of triangles in any n-vertex graph not containing Pˆ4 is ⌊n2/8⌋. Our method uses simple induction along with computer programming to prove a base case of the induction hypothesis.}
}

@article{Furedi2019,
author = {F\"{u}redi, Zolt\'{a}n and Jiang, Tao and Kostochka, Alexandr and Mubayi, Dhruv and Verstra\"{e}te, Jacques},
title = {Hypergraphs Not Containing a Tight Tree with a Bounded Trunk},
journal = {SIAM Journal on Discrete Mathematics},
volume = {33},
number = {2},
pages = {862-873},
year = {2019},
doi = {10.1137/17M1160926}
}

@article { ExtensionsofTurnstheoremongraphs,
      author = "G. Dirac",
      title = {{Extensions of Turán's theorem on graphs}},
      journal = "Acta Mathematica Hungarica",
      year = "1963",
      publisher = "Akadémiai Kiadó, co-published with Springer Science+Business Media B.V., Formerly Kluwer Academic Publishers B.V.",
      address = "Budapest, Hungary",
      volume = "14",
      number = "3-4",
      doi = "10.1007/bf01895726",
      pages=      "417 - 422",
      url = "https://akjournals.com/view/journals/10473/14/3-4/article-p417.xml"
}
\bibliographystyle{wyc4}

\end{document}